  \crefname{theorem}{Theorem}{Theorems}
  \crefname{thm}{Theorem}{Theorems}
  \crefname{lemma}{Lemma}{Lemmas}
  \crefname{lem}{Lemma}{Lemmas}
  \crefname{remark}{Remark}{Remarks}
    \crefname{rmk}{Remark}{Remarks}
  \crefname{prop}{Proposition}{Propositions}
\crefname{notation}{Notation}{Notations}
\crefname{claim}{Claim}{Claims}
  \crefname{defn}{Definition}{Definitions}
   \crefname{definition}{Definition}{Definitions}
  \crefname{cor}{Corollary}{Corollaries}
  \crefname{section}{Chapter}{Chapters}
  \crefname{subsection}{Section}{sections}
  \crefname{figure}{Figure}{Figures}
    \crefname{assumption}{Assumption}{Assumptions}
\newtheorem{theorem}{Theorem}[section]
\newtheorem{lemma}[theorem]{Lemma}
\newtheorem{prop}[theorem]{Proposition}
\numberwithin{equation}{section}
\theoremstyle{definition}
\def\cR{\mathcal{R}}
\def\cK{\mathcal{K}}
\def\P{\mathbb{P}}
\def\H{\mathbb{H}}
\def\D{\mathbb{D}}
\def\E{\mathbb{E}}
\def\C{\mathbb{C}}
\def\R{\mathbb{R}}
\def\eps{\varepsilon}
\DeclareMathOperator{\var}{Var}
\DeclareMathOperator{\hcap}{hcap}
\DeclareMathOperator{\diam}{Diam}
\renewcommand{\l}{\lambda}
\renewcommand{\d}{\delta}
\newcommand{\old}[1]{{}}
\newcommand{\blu}[1]{{\textcolor{black}{#1}}}
\begin{document}

\title{Explosive growth for a constrained Hastings--Levitov \\
aggregation model}


 \author{
\begin{tabular}{c} Nathana\"el Berestycki\\
\small University of Vienna \\
 \small nathanael.berestycki@univie.ac.at
 \end{tabular}
\begin{tabular}{c} Vittoria Silvestri\\\small University of Rome La Sapienza \\
\small silvestri@mat.uniroma1.it
\end{tabular}
}

\date{}

\maketitle

\abstract{We consider a constrained version of the HL$(0)$ Hastings--Levitov model of aggregation in the complex plane, in which particles can only attach to the part of the cluster that has already been grown. Although one might expect that this gives rise to a non-trivial limiting shape, we prove that the cluster grows explosively: in the upper half plane, the aggregate accumulates infinite diameter
as soon as it reaches positive capacity. More precisely, we show that after $nt$ particles of (half-plane) capacity $1/(2n)$ have attached, the diameter of the shape is highly concentrated around $\sqrt{t\log n}$, uniformly in $t\in [0,T]$. This illustrates a new instability phenomenon for the growth of single trees/fjords in unconstrained HL$(0)$.}

\section{Introduction}
We study the growth of an aggregate on the upper half-plane, described by the composition of random conformal maps, which is a modification of the well known Hastings--Levitov HL$(0)$ aggregation model. Let us succinctly describe it. Each map represents the arrival of a new particle, and the randomness comes from the attachment locations on the boundary of the aggregate.

Denote by $\mathbb{H} =\{ z \in \mathbb{C} : \Im (z) > 0\}$ the upper half-plane.
For $n \geq 1$ let $F: \mathbb{H} \to \mathbb{C} $ be the slit map
	\begin{equation}\label{eq:F}
	F(z) = \sqrt{z^2 - 1/n} , 
	\end{equation}
which grows a vertical slit of length $1/\sqrt{n} $ and half--plane capacity $1/(2n)$ at $0$ (see Figure \ref{F:slitmap} below). 

\begin{figure}[!h]
  \begin{center}
    \includegraphics[width=.6\textwidth]{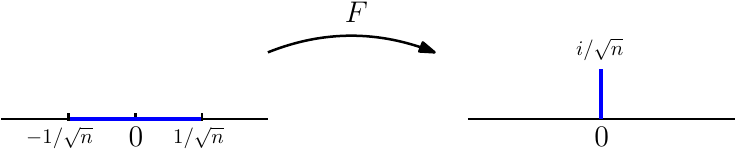}
    \caption{The map $F$ growing a slit of height $1/\sqrt{n}$ at the origin.
    \label{F:slitmap}}
  \end{center}
\end{figure}


Then, if $x \in \mathbb{R}$,  the map
	\[ F_x(z) =  x+ \sqrt{(z-x)^2 -1/n }\]
grows a vertical slit of height $1/\sqrt{n}$ at $x$. We use this to define a growth model as follows. Let $\Phi_0 (z) = z$, \blu{$\Phi_1 (z) = F (z)  $ with $F$ as in \eqref{eq:F} above,} and denote\footnote{For a set $\mathcal{K} \subseteq \mathbb{H}$ and a map $f: \mathbb{H} \to\mathbb{C}$, we write $f(\mathcal{K}) =\{ f(z) : z \in \mathcal{K}\}$.}  by $\mathcal{K}_1 = \overline{\mathbb{H} \setminus \Phi_1 (\mathbb{H} ) }$ the closure of the complement of $\Phi_1(\mathbb{H} )$ in $\mathbb{H}$, so that $\mathcal{K}_1$ is a compact set consisting of a vertical slit at $0$.
Denote by  $\Gamma_1(z) = \Phi_1^{-1}(z)$ the associated mapping--out function. Then $\Gamma_1$ maps the slit $\mathcal{K}_1$ to the interval $[-L_1 , R_1 ]$ on the real line, with $L_1=R_1=1/\sqrt{n}$. To the initial cluster $\mathcal{K}_1$ we attach particles as follows.
Suppose inductively that for $k\geq 1$ we have defined the map $\Phi_k $ that grows a random compact set $\mathcal{K}_k$, which we think of as the cluster up to the $k^{th}$ arrival. Let $\Gamma_k = \Phi_k^{-1} $ denote the associated mapping out function, and assume that  $\Gamma_k (\mathcal{K}_k ) $ is an interval on the real line, call it $ [-L_k , R_k ]$. Then choose a uniformly random point $x_{k+1} $ on $[-L_k , R_k ]$, and set
	\begin{equation}\label{Phik}
\Phi_{k+1} = \Phi_k \circ F_{x_k} ,
	\qquad \mathcal{K}_{k+1} = \overline{\mathbb{H} \setminus \Phi_{k+1} (\mathbb{H} )}. \end{equation}
Note that $\mathcal{K}_{k+1} = \mathcal{K}_k \cup P_{k+1} $ where
$P_{k+1} = \overline{ \Phi_k ( [x_{k+1} , x_{k+1} + i /\sqrt{n} ] ) }$ represents the $(k+1)^{th} $ particle.
Unfolding the recursion, we see that this growth mechanism corresponds to setting
	\[ \Phi_k (z) =  F_{x_1} \circ F_{x_2} \circ \cdots \circ F_{x_k} (z) , \]
for $x_1 , x_2 \ldots x_k $ random points with $x_1 =0$ and $x_{j+1} \sim Uniform [-L_j , R_j ]$ for $j\geq 1$. We refer to $x_k$ as the attachment location of the $k^{th}$ particle $P_k$, \blu{since it determines the location $\Phi_{k-1} ( x_k ) $ at which the $k^{th}$ particles attaches to the cluster $\cK_{k-1}$ (see Figure \ref{F:smallcluster} below).}
Note that $L_k , R_k \geq 0$ for all $k\geq 1$ by construction.\\

\begin{figure}[!h]
  \begin{center}
    \includegraphics[width=.75\textwidth]{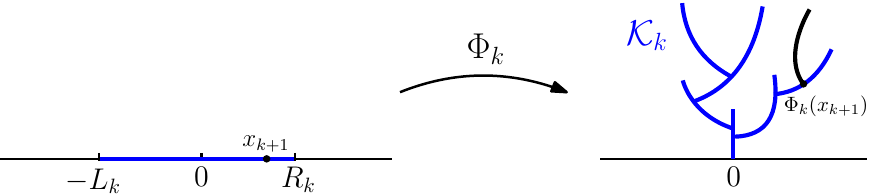}
    \caption{The map $\Phi_k$ maps the interval $[-L_k , R_k]$ to the cluster $\cK_k$ with $k$ particles. \label{F:smallcluster}}
  \end{center}
\end{figure}

The above model is a variant of the well-known Hastings--Levitov model HL($0$), where the growth usually takes place outside the unit disc $\D = \{ |z|\leq 1\}$, rather than on the upper half--plane $\H$ \cite{carleson2001aggregation,hastings1998laplacian,norris2012hastings,silvestri2017fluctuation} (see however \cite{berger2020growth} by Berger, Procaccia and Turner, where this model is introduced in the upper half-plane). 

\begin{figure}[t]
\begin{center}
\includegraphics[width=.4\textwidth]{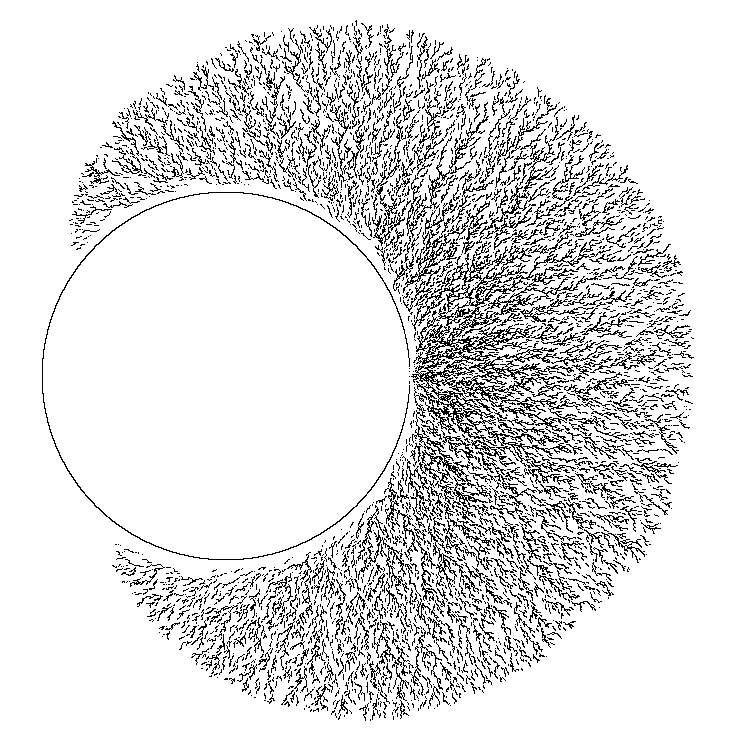}
\end{center}
\caption{Restricted HL$(0)$ on the outside of the unit disc. To (approximately) quote Magritte, this is not a shape theorem.}
\label{F:}
\end{figure}

We call this new model \textbf{constrained Hastings--Levitov}, as the particles are only allowed to attach on the boundary of the grown region, rather than everywhere on the real line. In other words, the growth of the cluster is restricted to a single tree. See Figure \ref{F:} for a simulation of constrained HL$(0)$ outside the unit disc.

\subsection{Main results}


The question is to describe the shape of the cluster $\mathcal{K}_n$ for large $n$. This is implicitly obtained by solving Loewner equation
	\begin{equation}\label{EQn}
	 \begin{cases}
	\dot f_t^{(n)} (z) =  \partial_z f_t ^{(n)} (z)  \displaystyle \int_\mathbb{R} \frac 1{x-z } \mu_t^{(n)} (dx)
\\ f_0^{(n)} (z) = z
\end{cases}
\end{equation}
 driven by the random measure
	\begin{equation}\label{E:drive}
	 \mu^{(n)}_t (dx) = \blu{\sum_{k= 0 }^{\infty}} 1 \Big( t \in \Big[ \frac kn , \frac {k+1}n \Big) \Big)
	\d_{x_k } (dx) ,
	\end{equation}
where $\d_x$ denotes the Dirac's delta measure centered at $x\in \mathbb{R}$,  and $x_1 , x_2 \ldots $ denote the attachment locations of subsequent particles on the real line. Note that, by definition of the model, $x_k $ is uniformly distributed in the interval $[-L_{k-1} , R_{k-1} ]$, so we start by describing the asymptotic behaviour of this random interval for large $n$.

To state the main result, write $\l_k = L_k + R_k$ for the length of the interval after $k$ \blu{arrivals}. As the half-plane capacity of each particle is $1/(2n)$ and half-plane capacity is additive, we obtain a half-plane capacity of order $O(1)$ when $k = tn$ particles are added to the aggregate. Based on this simple observation we typically expect a nontrivial shape in this regime; this is in particular what happens for unconstrained HL$(0)$, and seems consistent with the simulations in Figure \ref{F:}. However this turns out to not be the case: indeed our results show that after $nt$ particles the diameter diverges like $\sqrt{t \log n}$. To prove this, the key is to understand the size of the interval in which particles are allowed to attach after we map out the cluster. This is specified by the result below:

\begin{theorem}\label{T:main}
For any $\eps >0$ and $T>0$ it holds \blu{that}
	\[ \P \left( \sup_{t \in [(\log n)^9/n , T] } \Big| \frac{R_{nt}}{\sqrt{t \log n } } -\frac 12\Big| \leq  \eps \right)
	\geq 1-  e^{-c(\log n )^2} \]
for $n$ large enough ($n\geq n_0(\eps , T)$). Here $c = c(\eps , T)$ is a constant which may depend on $\eps $ and $T$ but not on $n$. Moreover, the same holds with $L_{nt}$ in place of $R_{nt}$. In particular
	\begin{equation}\label{Lmain}
	 \P \left(\sup_{t\in [(\log n)^9 /n , T ]} \Big| \frac{\l_{nt}}{\sqrt{t \log n } } -1\Big| \leq  \eps \right)
	\geq 1-  2e^{-c(\log n )^2}
	\end{equation}
for $n$ large enough.
\end{theorem}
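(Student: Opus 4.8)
The plan is to track the evolution of the right endpoint $R_k$ (the argument for $L_k$ being symmetric) as particles are added, and to show that $R_k^2$ performs an approximate random walk whose drift produces the $\sqrt{t\log n}$ scaling. The key algebraic fact is to understand how $R_k$ changes when we append one particle. Recall $\Gamma_k=\Phi_k^{-1}$ maps $\cK_k$ to $[-L_k,R_k]$, and the $(k+1)$-st particle is attached at a uniform point $x_{k+1}\in[-L_k,R_k]$, then mapped out by $F_{x_{k+1}}^{-1}$. Composing mapping-out functions, the new interval $[-L_{k+1},R_{k+1}]=\Gamma_{k+1}(\cK_{k+1})$ is obtained from $[-L_k,R_k]$ by applying $F_{x_{k+1}}^{-1}$ and adjoining the preimage of the new slit. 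Since $F_x^{-1}(z)=x+\sqrt{(z-x)^2+1/n}$, a direct computation gives
\[
R_{k+1} = F_{x_{k+1}}^{-1}(R_k) = x_{k+1}+\sqrt{(R_k-x_{k+1})^2 + 1/n},
\]
as long as the new particle does not land to the right of $R_k$ and "overtake" it; when $x_{k+1}$ is near $R_k$ one must be slightly more careful, but the contribution of the new slit's width is also $O(1/\sqrt n)$ and can be absorbed. Writing $y_{k+1}=R_k-x_{k+1}\in[0,\lambda_k]$, we get $R_{k+1}=R_k+ (\sqrt{y_{k+1}^2+1/n}-y_{k+1})$, so $R_{k+1}-R_k = g(y_{k+1})$ where $g(y)=\sqrt{y^2+1/n}-y$ is positive, decreasing, with $g(0)=1/\sqrt n$ and $g(y)\approx 1/(2ny)$ for $y\gg 1/\sqrt n$.

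Next I would compute the conditional drift. Given $\cF_k$, the point $x_{k+1}$ is uniform on an interval of length $\lambda_k=L_k+R_k$, so $y_{k+1}=R_k-x_{k+1}$ is uniform on $[R_k-\lambda_k\cdot 1, \ldots]$; more simply $y_{k+1}$ ranges over an interval of length $\lambda_k$ containing $[0,R_k]$ and up to $L_k$ on the negative side (where the formula for $R_{k+1}$ is instead governed by the other branch — again an $O(1/\sqrt n)$ correction). The dominant behaviour is
\[
\E[R_{k+1}-R_k\mid \cF_k] \approx \frac{1}{\lambda_k}\int_0^{R_k} g(y)\,dy \approx \frac{1}{\lambda_k}\cdot\frac{\log n}{4n},
\]
since $\int_0^{R_k} g(y)\,dy$ is dominated by the logarithmic divergence $\int_{1/\sqrt n}^{R_k} \frac{dy}{2ny}\approx \frac{\log(R_k\sqrt n)}{2n}\approx \frac{\log n}{4n}$ as long as $R_k$ is of polynomial-in-$n$ order (which the event in the theorem controls). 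Then
\[
\E[R_{k+1}^2 - R_k^2\mid\cF_k]\approx 2R_k\,\E[R_{k+1}-R_k\mid\cF_k]\approx \frac{R_k}{\lambda_k}\cdot\frac{\log n}{2n}.
\]
If one believes the heuristic $L_k\approx R_k$ so that $R_k/\lambda_k\approx 1/2$, this gives $\E[R_{k+1}^2-R_k^2\mid\cF_k]\approx \frac{\log n}{4n}$, hence $R_{nt}^2\approx \frac{t\log n}{4}$, i.e. $R_{nt}\approx \frac12\sqrt{t\log n}$, matching the claim. The symmetric statement for $L$ and the bound $R_k/\lambda_k\to 1/2$ should be bootstrapped together.

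To make this rigorous I would proceed as follows. First, derive the exact one-step recursion for $R_{k+1}$ and $L_{k+1}$ in terms of $x_{k+1}$, carefully handling the two cases according to the sign of $R_k - x_{k+1}$, and bound all lower-order terms by $O(1/\sqrt n)$. Second, set up a stopping time $\tau$ = first time $R_k$ or $L_k$ leaves a window like $[\tfrac12(1-\eps)\sqrt{t\log n},\tfrac12(1+\eps)\sqrt{t\log n}]$ (with the handling of the initial short interval $t\le(\log n)^9/n$ done separately, using that $R_k$ grows from its deterministic start $1/\sqrt n$ fast enough to reach the regime where the log-drift kicks in), and work up to $\tau$. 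On that event the ratios $R_k/\lambda_k$ are pinned near $1/2$, the drift computation above is valid with controlled error, and the increments $R_{k+1}^2-R_k^2$ are bounded by $O((\log n)/n)$ deterministically — actually by $O(1/n^{something})$ except when $x_{k+1}$ is within $1/\sqrt n$ of $R_k$, an event of probability $O(1/(\sqrt n\,\lambda_k))$. Third, apply a martingale concentration argument (Freedman/Azuma–Bernstein) to the compensated process $R_{k}^2 - \sum_{j<k}\E[R_{j+1}^2-R_j^2\mid\cF_j]$, noting the sum of conditional variances is small enough (each term is at most $R_k^2\cdot O(1/n)$ times the jump probability, summing to $o((\log n)^2)$) to get deviations of order $\sqrt{n\cdot (\mathrm{increment})^2}\ll \sqrt{t\log n}\cdot \eps$ with probability $1-e^{-c(\log n)^2}$. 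Fourth, a union bound over a polynomially fine grid of times $t$ plus monotonicity of $k\mapsto R_k$ upgrades this to the uniform-in-$t$ statement, and shows $\tau > Tn$ with the stated probability, closing the bootstrap. Finally, $\lambda_{nt}=L_{nt}+R_{nt}$ gives \eqref{Lmain}.

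The main obstacle I anticipate is controlling the behaviour when a particle attaches very close to the current right endpoint $R_k$ (i.e. $y_{k+1}=O(1/\sqrt n)$): there the simple formula $R_{k+1}=x_{k+1}+\sqrt{y_{k+1}^2+1/n}$ interacts with the newly-adjoined slit of width $\sim 1/\sqrt n$, so $R_{k+1}-R_k$ can be as large as $\sim 1/\sqrt n$ rather than $O(1/n)$. One must show these large jumps are rare enough (probability $O(1/(\sqrt n \lambda_k))=O(1/\sqrt{n\log n})$ per step, so $O(\sqrt{n/\log n})$ of them over $Tn$ steps, each contributing $O(1/\sqrt n)$, total $O(\sqrt{n}/\sqrt{n\log n})\cdot$ — this needs care) and that their cumulative effect, both on the drift and on the fluctuations, is lower order; a truncation/compensation split of the increment into a "bulk" part and a "spike" part, estimated separately, is the natural device. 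A secondary technical point is the self-referential nature of the estimate — the drift depends on $R_k/\lambda_k$ which depends on both $L_k$ and $R_k$ — which is why the argument must be run as a simultaneous bootstrap for $L$ and $R$ up to the first exit time.
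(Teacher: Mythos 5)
Your route is genuinely different from the paper's. You propose an additive drift--plus--Freedman analysis of $R_k^2$ (and $L_k^2$) at fixed times, with a stopped-process bootstrap pinning $R_k/\lambda_k$ near $1/2$; the paper instead works multiplicatively, introducing stopping times $T_k$ at which the length $\lambda$ grows by a factor $1+\delta$, proving matching lower and upper bounds on these doubling times (\cref{P:doublingtimelower} and \cref{P:doublingtimeupper}) by decomposing the push over dyadic scales $I_{i,j}$ and applying Freedman's inequality scale by scale, and only at the very end splitting $\lambda$ into $R$ and $L$. Your core ingredients do match the paper's: the one-step formula $R_{k+1}=x_{k+1}+\sqrt{(R_k-x_{k+1})^2+1/n}$ and the resulting drift $\approx \log n/(4n\lambda_k)$ are the content of \cref{L:push1particle} and \cref{L:exp_push} (note the formula is valid for \emph{every} $x_{k+1}\in[-L_k,R_k]$: since $y=R_k-x_{k+1}\ge 0$ always, there is no ``other branch'' for $x_{k+1}<0$ and no overtaking issue to handle), and your treatment of attachments within $1/\sqrt n$ of the front is the analogue of the $\Delta_{\min}$ bound in \cref{L:Deltamin}. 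In the bulk regime $t\ge (\log n)^9/n$ the orders of magnitude in your Freedman step do check out, and a grid-plus-monotonicity union bound is fine.

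There are, however, two genuine holes as written. The main one is the initialization at $t_0=(\log n)^9/n$. Your scheme needs $R_{nt_0}$, $L_{nt_0}$ already concentrated around $\tfrac12\sqrt{t_0\log n}=\tfrac12(\log n)^5/\sqrt n$ (in particular $R/\lambda$ near $1/2$) to start the bootstrap, but your one-line justification (``grows fast enough from $1/\sqrt n$'') addresses only the lower bound. The hard direction early on is the upper bound: deterministically $R$ can gain up to $1/\sqrt n$ per step, i.e.\ up to $(\log n)^9/\sqrt n\gg(\log n)^5/\sqrt n$ over the first $(\log n)^9$ steps, and in this phase per-step relative fluctuations are of order one and the ratio $R_k/\lambda_k$ is not yet pinned, so neither your drift estimate for $R^2$ nor the crude bound $R_k\le\lambda_k$ yields concentration at $t_0$ (the crude bound already loses a factor $\sqrt2$ on $R$). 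This is exactly the regime the paper's doubling-time machinery handles uniformly over all scales above $l(n)=(\log n)^3/\sqrt n$, together with the deterministic bound $T_1\le(\log n)^7$. A repair inside your framework exists --- run the drift argument for $\lambda^2$, whose drift $\approx\log n/n$ needs no ratio control, and control $R-L$ separately as an approximate martingale with small quadratic variation --- but that is a missing ingredient, not a detail. The second, smaller, issue is the closure of the bootstrap: if you only assume $R_k,L_k$ lie in a window of relative width $\eps$, the drift of $R^2$ is known only up to relative error $\eps$, so a priori you recover a window of the same width and the argument looks circular. You must either observe explicitly that taking the square root halves the relative error (an $\eps$-window hypothesis outputs an $\approx\eps/2$-window plus fluctuations, so the stopped process cannot exit before $nT$), or again decouple via $\lambda$ and $R-L$; as stated, this step is not justified.
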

Note in particular that after $nt$ arrivals, even though the capacity of the cluster is of unit order, the length of the interval tends to infinity!


\subsection{Geometric consequences and conjectures}

We use Theorem \ref{T:main} to deduce geometric information \blu{about} the cluster $\mathcal{K}_{nt}$, showing that it has diverging diameter in $n$.
\begin{theorem}\label{T:geometric}
Let $\diam (\cK_{nt}) = \sup\{|x - y|: x, y \in \cK_{nt} \}$ denote the diameter of $\cK_{nt}$. Then
$$
\frac{\diam (\cK_{nt})}{\sqrt{t \log n}} \to 1
$$
in probability. Furthermore,
$$
\max_{z \in \cK_{nt}} \{\Im (z)\} \le  \sqrt{2t},
$$
almost surely.
\end{theorem}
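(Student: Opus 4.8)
\emph{Plan of proof.} We treat the two assertions separately.

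\textbf{Height.} The point is a pointwise estimate for a single slit map. Writing $z=u+iv\in\overline{\mathbb H}$, $\zeta=(z-x)^2-1/n$, and using $(\Im\sqrt\zeta)^2=\tfrac12(|\zeta|-\Re\zeta)$ for the branch of the square root fixing $\mathbb H$, together with $|\zeta|\le|z-x|^2+1/n$, one obtains $(\Im F_x(z))^2\le v^2+1/n$. Iterating along $\Phi_k=F_{x_1}\circ\cdots\circ F_{x_k}$ gives $(\Im\Phi_k(z))^2\le(\Im z)^2+k/n$ for every $z\in\overline{\mathbb H}$. Since $\cK_{nt}=\cK_1\cup\bigcup_{2\le k\le nt}P_k$ with $P_k=\overline{\Phi_{k-1}([x_k,x_k+i/\sqrt n])}$, and since $\cK_1$ has height $1/\sqrt n$, every $w\in\cK_{nt}$ is $\Phi_{k-1}$ of a point of imaginary part $\le1/\sqrt n$ for some $k\le nt$; hence $(\Im w)^2\le1/n+(k-1)/n\le t$. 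Thus $\max_{w\in\cK_{nt}}\Im w\le\sqrt t\le\sqrt{2t}$, in fact the stronger bound $\sqrt t$.

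\textbf{Diameter.} By the height bound $\cK_{nt}$ lies in the strip $\{0\le\Im z\le\sqrt{2t}\}$, so with $W_{nt}:=\sup_{\cK_{nt}}\Re-\inf_{\cK_{nt}}\Re$ we have $W_{nt}\le\diam(\cK_{nt})\le W_{nt}+\sqrt{2t}$; as $\sqrt t=o(\sqrt{t\log n})$ it suffices to prove $W_{nt}/\sqrt{t\log n}\to1$ in probability. For the upper bound, the same explicit formula shows that $\Re F_x(z)$ always lies in the closed interval with endpoints $x$ and $\Re z$: indeed $\Re F_x(z)-x$ has the same sign as $\Re z-x$, and $|\Re F_x(z)-x|\le|\Re z-x|$ by the inequality $|\zeta|\le|z-x|^2+1/n$ once more. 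Applying this along $\Phi_{k-1}=F_{x_1}\circ\cdots\circ F_{x_{k-1}}$ to the points $x_k+is$, $s\in[0,1/\sqrt n]$, shows that every point of $P_k$ has real part in $[\min_{j\le k}x_j,\max_{j\le k}x_j]$; since every $x_j\in[-L_{nt},R_{nt}]$ this gives the deterministic bound $W_{nt}\le\l_{nt}$, hence $\diam(\cK_{nt})\le\l_{nt}+\sqrt{2t}$. For the lower bound, let $\mathcal R_{nt}:=[\inf_{\cK_{nt}}\Re,\ \sup_{\cK_{nt}}\Re]\times[0,\sqrt{2t}]\supseteq\cK_{nt}$; by monotonicity of the uniformising interval length under hull inclusion (equivalently, conformal invariance of harmonic measure from $\infty$) one has $\l_{nt}\le\l(\mathcal R_{nt})$, and an elementary estimate for the conformal map of a thin box gives $\l(\mathcal R_{nt})\le W_{nt}+C\sqrt t\,\log\!\big(e\,W_{nt}/\sqrt t\big)$. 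Plugging in $W_{nt}\le\l_{nt}$ together with \Cref{T:main}, which off an event of probability at most $e^{-c(\log n)^2}$ confines $\l_{nt}$ to $\sqrt{t\log n}\,(1\pm\eps)$, we get $W_{nt}\ge\l_{nt}-O(\sqrt t\,\log\log n)\ge\sqrt{t\log n}\,(1-2\eps)$ for $n$ large. Combining the two bounds yields the claimed convergence (indeed with the same rate as in \Cref{T:main}).

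The main point of the argument is this last comparison, which converts the purely conformal information of \Cref{T:main} about the interval $[-L_{nt},R_{nt}]$ into a statement about the \emph{Euclidean} geometry of $\cK_{nt}$, with the sharp constant. Here the thinness of the cluster — aspect ratio $\sqrt t/\sqrt{t\log n}=(\log n)^{-1/2}\to0$ — is used in an essential way: the identity $\l(\mathcal R_{nt})=W_{nt}(1+o(1))$ degrades to a mere constant-factor estimate as soon as the bounding box ceases to be thin. The remaining ingredients — the two slit-map inequalities and the box estimate — are elementary, so I expect this comparison, and the attendant bookkeeping of error terms, to be the only substantial step.
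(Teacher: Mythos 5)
Your proposal is correct, and two of its three components take a genuinely different route from the paper's proof. For the height bound the paper uses that the half-plane capacity of $\cK_{nt}$ is of order $t$ together with the lemma that $\hcap(K)\ge y^2/2$ whenever $x+iy\in K$ (proved by reflection); your pointwise identity $(\Im\sqrt{\zeta})^2=\tfrac12(|\zeta|-\Re\zeta)$ applied to a single slit map gives $(\Im\Phi_k(z))^2\le(\Im z)^2+k/n$ directly and even yields the sharper constant $\sqrt{t}$ --- both arguments are sound. For the diameter upper bound the paper argues by contradiction via harmonic measure from infinity plus a topological argument about the curve joining two distant points of the cluster, obtaining $\diam(\cK_{nt})\le \l_{nt}+O(1)$ only with high probability; your real-part contraction $|\Re F_x(z)-x|\le|\Re z-x|$ (with the sign preserved), iterated along $\Phi_{k-1}$ and combined with monotonicity of $L_k,R_k$, gives the deterministic inequality $\sup_{\cK_{nt}}\Re-\inf_{\cK_{nt}}\Re\le\l_{nt}$, which is cleaner and dispenses with connectivity and harmonic-measure considerations entirely. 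For the lower bound your route essentially coincides with the paper's: enclose $\cK_{nt}$ in a thin rectangle, use monotonicity of the uniformised interval length (harmonic measure from infinity) under hull inclusion, and compare the rectangle's interval length to its width $W$.

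The one step you leave unproved --- the thin-box estimate --- is precisely the step the paper carries out in detail, via the Cauchy-kernel computation for Brownian motion started at $iy$, and it holds in the sharper form $\l(\cR)\le W+4\sqrt{2t}$ with no logarithmic factor. As literally stated, your version $\l(\cR)\le W+C\sqrt{t}\,\log(eW/\sqrt{t})$ fails for $W\lesssim\sqrt{t}$ (a degenerate rectangle of width $0$ still has interval length $2\sqrt{2t}$), so either restrict it to $W\ge\sqrt{t}$ or simply use the constant form; with that fix your error term becomes $O(\sqrt{t})$ rather than the $O(\sqrt{t}\log\log n)$ you budget for, and the comparison with Theorem \ref{T:main} closes the argument exactly as you describe.
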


The above theorem says that the shape of the cluster is very elongated, being of order $O(1)$ in height and of order approximately $\sqrt{t \log n}$ in width.

It is tempting to conjecture that the cluster shape (viewed from the outside) is more precisely described by the following Loewner--Kufarev evolution: namely, denote by $f_t(z)$ the deterministic map solution to the Loewner equation
	\begin{equation}\label{EQ}
	 \begin{cases}
	\dot f_t (z) = \partial_z f_t (z)
	\displaystyle \int_\mathbb{R} \frac 1{x-z } \mu_t (dx)
	\\
	f_0(z)=z ,
	\end{cases}
	\end{equation}
where $\mu_t $ denotes the uniform measure on the interval $\big[ - \frac{\sqrt{t \log n} }{2} , \frac{\sqrt{t \log n} }{2} \big]$. Recall the conformal map $\Phi_k$, defined in \eqref{Phik},
which maps the upper half plane $\H$ to $H_k = \H \setminus \cK_k$ and grows the restricted Hastings--Levitov cluster $\cK_k$ up to the $k^{th}$ particle.

Since $\Phi_k$ can be described by a Loewner-Kufarev evolution driven by the measures $\mu_t^{(n)}$, which is close in a suitable sense to the deterministic measure $\mu_t$ defined above, it is natural to guess that $\Phi_{nt}$ is close to the conformal map $f_t$ (indeed we note that a related continuity statement of the Loewner--Kufarev equation was obtained by \cite{bauer2003discrete}, \cite{johansson2009rescaled} and \cite{viklund2012scaling}). Indeed using such an approach (and a stronger uniform continuity of the Loewner--Kufarev evolution), one can prove the following statement:
for any $\eps , t>0$ and compact set $K \subset \mathbb{H}$ there exists a constant $C(t,K)$ such that
	\[ \P \left( \sup_{z \in K } \big| \Phi_{nt}(z) - f_t (z) \big| \leq C(t, K) \, \eps \mbox{ eventually in } n \right) =1. \]
However, the conformal map $f_t$, although deterministic, still depends on $n$ and is in the pointwise limit as $n \to \infty$ close to the trivial identity map, so this statement in itself doesn't convey useful information about the geometry of the cluster.

In fact, formulating a precise statement on how close $\Phi_{nt}$ needs to be to $f_t$ in order to capture something meaningful is in itself not entirely trivial. At the very least one would need to prove that the size of the difference between $\Phi_{nt}$ and $f_t$ is smaller than the size of the discrepancy between $f_t$ and the identity map, and so would need to be formulated quantitatively.

  Let us expand a little on the orders of magnitude that we believe will be involved. We believe that these can be deduced from Theorem \ref{T:geometric}: indeed, since the half-plane capacity of $\cK_{nt}$ is $t$ and the diameter is approximately $\sqrt{t \log n}$, we believe that the typical height of the cluster is of order $O( 1/ \sqrt{ \log n})$ for a fixed time $t$. Therefore we conjecture that  $|f_t(z)- z|$ is of this order of magnitude too.

\paragraph{Simulations.} To complement this discussion we include some simulations of our restricted HL$(0)$ aggregation model on the upper half plane. These simulations do not show the entire cluster but only an envelope of the cluster: roughly speaking they show the conformal image of $\{ z \in \H: \Im (z) = \eps\}$ under the conformal map $\Phi_{nt}$ sending $\H$ to $\H \setminus \cK_{nt}$, for some small $\eps>0$, and for various values of $n$ and $t$; in fact they are parametrised instead by $N$ (the number of particles) and the individual size (i.e., length) which is called $d$ in these pictures (or equivalently, half-plane capacity $d^2/2$). The relation between these parameters is as follows:
\begin{align*}
d^2 &= 1/n,\\
N &= t/d^2 ,
\end{align*}
where $N$ is the number of particles.

\begin{figure}\begin{center}
\includegraphics[width=.8\textwidth]{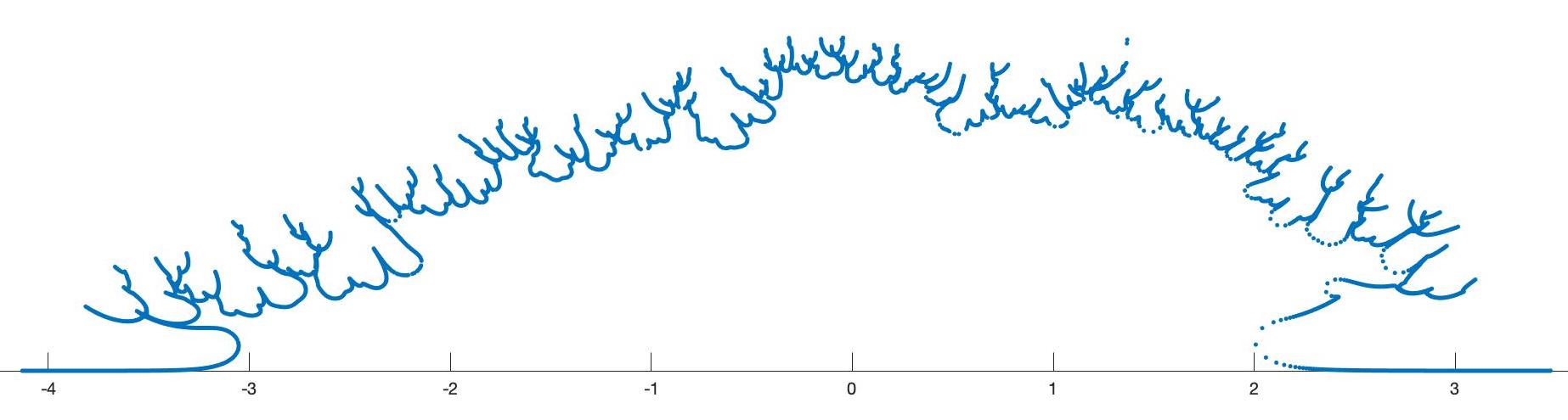}
\vspace{5mm}

\includegraphics[width=.8\textwidth]{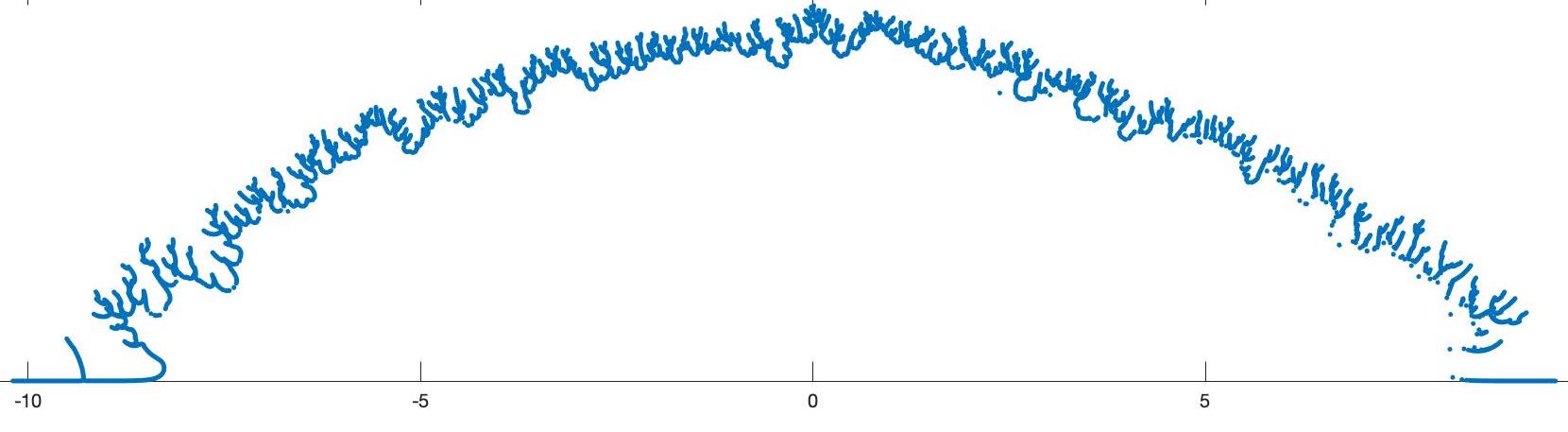}
\vspace{5mm}

\includegraphics[width=.8\textwidth]{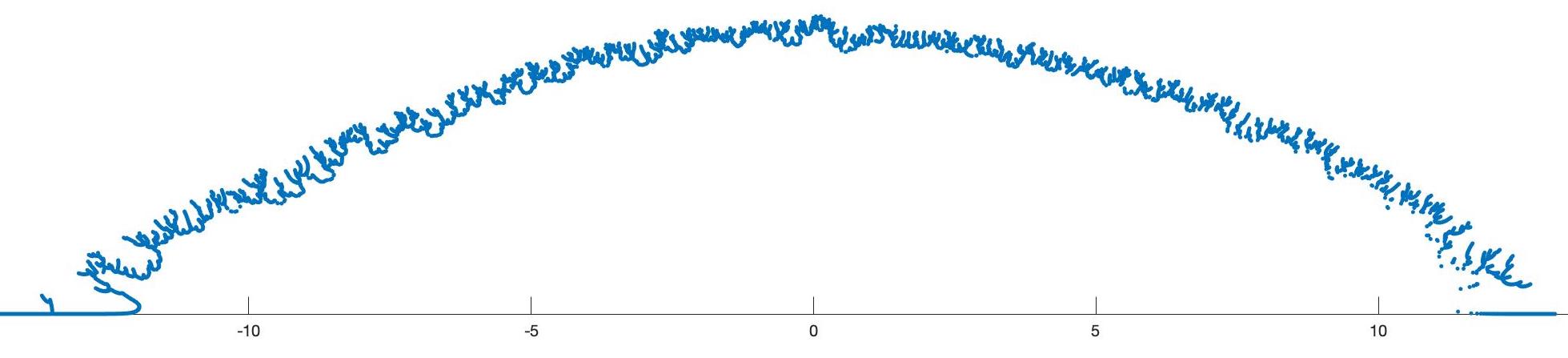}

\end{center}
\caption{Simulations of cluster envelope at $\eps = 10^{-5}$ with $N = 1000, \, 5000$ and $10000$ with $d = 0.1$. This corresponds to $n = 10^{2}$ and $t = 10, \, 50$ and $100$ respectively.
\label{F:clusters}}
\end{figure}

We also include for these same simulations the image of the cluster under the conformal map $\Phi_{k}^{-1}$, which is the interval $[-L_k, R_k]$ of the real line, described by Theorem \ref{T:main} and crucial to the arguments of this paper. The simulations also show the actual locations of where particle successively attach.

\begin{figure}\begin{center}
\includegraphics[width=.48\textwidth]{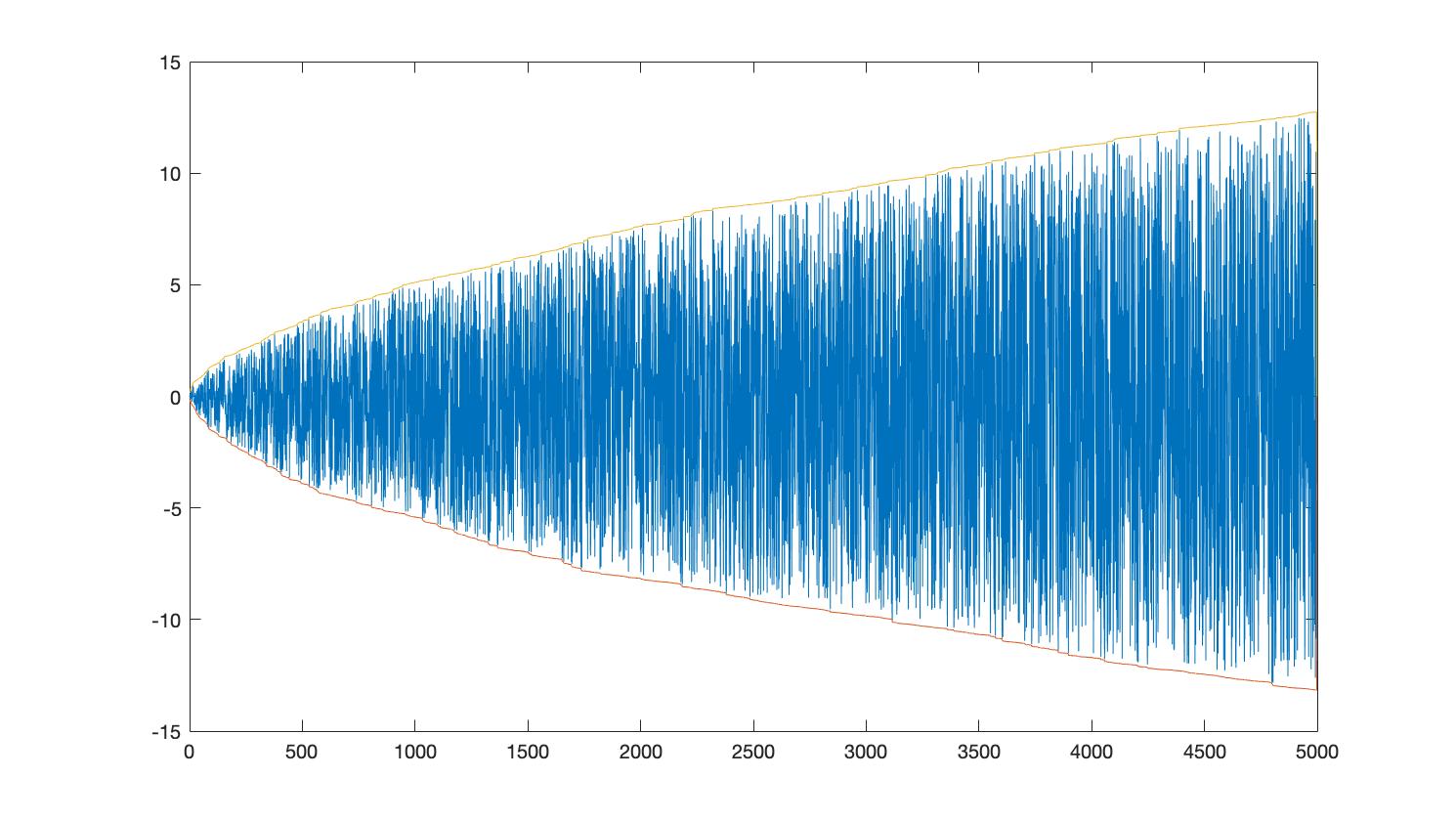}
\includegraphics[width=.48\textwidth]{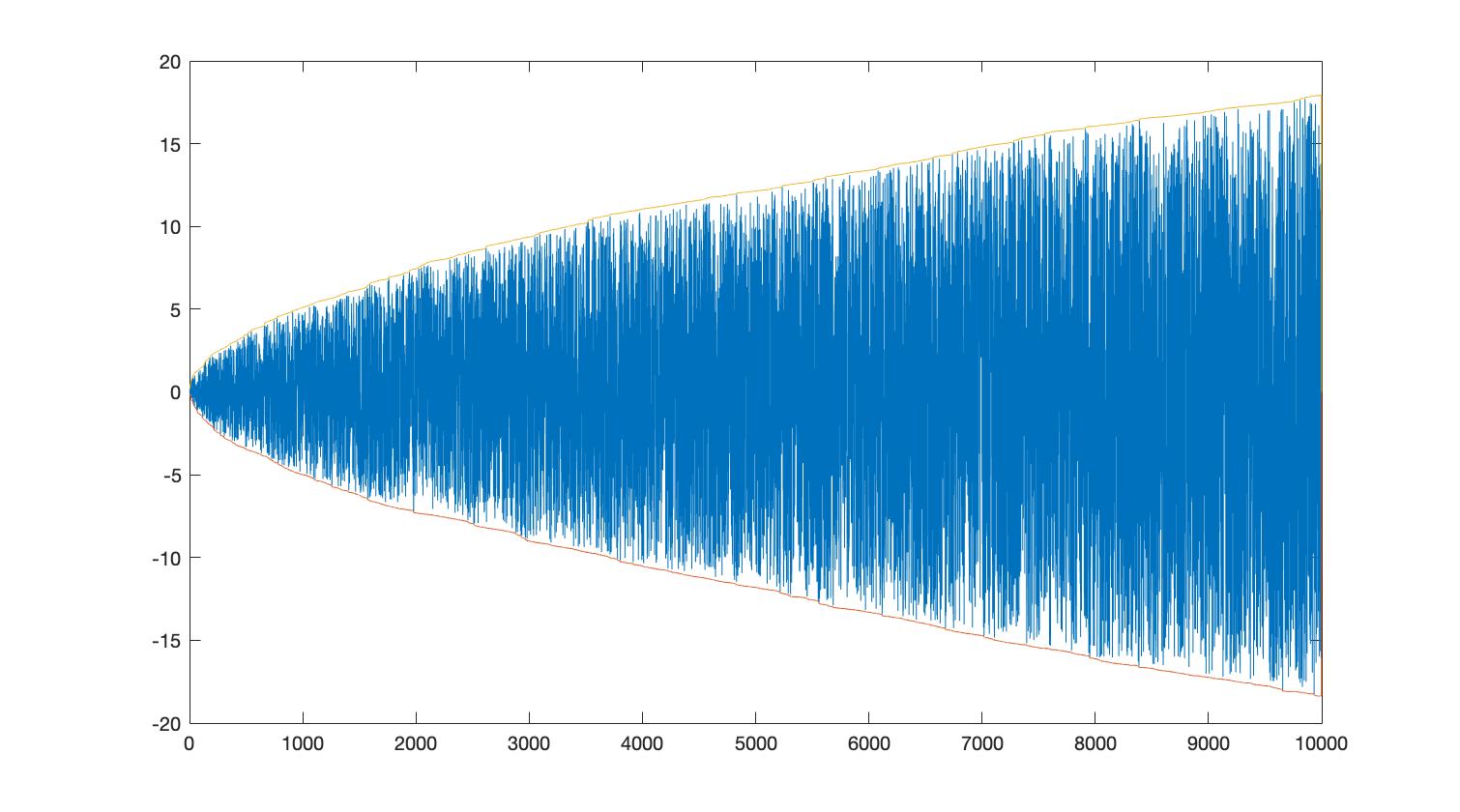}
\end{center}
\caption{Simulations of the allowed interval with $N = 5000$ and $10000$ (second and third simulations in Figure \ref{F:clusters}.}
\end{figure}

\subsection{Aggregation outside the unit disc}
\label{S:disc}

We have so far focused for convenience and ease on the situation in the upper-half plane, but it is in fact more standard to consider HL$(0)$ as an aggregation model outside the unit disc (the recent paper \cite{berger2020growth} by Berger, Procaccia and Turner being the exception). The model is defined similarly by replacing the upper-half plane $\H$ with the unit disc $\D$, and the elementary maps describing the slits are obtained from $F$ by conjugating with respect to a conformal map from $\C \setminus \D$ to $\H$. See e.g. \cite{norris2012hastings} for details. 

In order to keep the paper short and simple, we decided not to include results related to this case, \blu{although it should not be hard to see that our techniques apply to this setting as well}, at least as long as the two arms of the cluster remain macroscopically far apart (i.e., they do not shield the unit disc almost entirely). \blu{In particular, we believe that the following result holds}: for any $0< \alpha<1$, let $\tau_\alpha$ denote the first $k \ge 1$ such that the $k$th particle attaches at a position whose argument is in $[\pi(1 - \alpha), \pi(1 + \alpha)]$. Then, for any $0<\alpha<1$, as $n \to \infty$,
%
%
\begin{equation}
{\tau_\alpha} \sim \blu{\frac{2\pi^2 n (1-\alpha)^2}{\log n}}
\end{equation}
in probability, where $X_n \sim Y_n$ in probability as $n \to \infty$ means $X_n / Y_n \to 1$ in probability. Observe that the surprising phenomenon here is that $\tau_\alpha = o(n)$.

\subsection{Heuristic relation to instability in unconstrained Hastings--Levitov}

Compared to the unconstrained model (especially the disc one discussed in Section \ref{S:disc}), the only difference with our constrained model is that particles can only attach to the tree generated by the first particle. In other words, the constrained model describes the growth of a \emph{single} tree. From this perspective, the explosive growth demonstrated in Theorem \ref{T:main} shows a fundamental instability property which seems not to have been noticed before: left to its own devices, a single tree or fjord grows explosively. The existence of a shape theorem for regular (i.e., unconstrained) HL$(0)$ appears from this point of view nothing short of miraculous: essentially, this suggests that multiple fjords and subtrees are ``canceling each other out'' from the point of view of conformal maps.

We believe that this effect becomes less pronounced when particle sizes are rescaled according to the derivative of the conformal map at the location where particles attach -- this is for instance the situation of the HL$(\alpha)$, for $\alpha>0$ (see e.g. \cite{viklund2013small}, \cite{norris2021stability} and \cite{norris2022scaling} for a definition and some fine properties of the model). 

It has been proposed (and indeed proved for $\alpha<1$ in \cite{norris2021stability} for the unconstrained model) 
that compensating the size of the particles by a power of the derivative of the conformal map has an overall identical effect as keeping the size fixed but reweighing the law of the attachment by a power of the harmonic measure. In the constrained model however we expect that these two models behave differently for every $\alpha>0$, and the constrained model might well remain explosive even for values of $\alpha$ above $\alpha =1$.

\subsection{Sketch of proof of Theorem \ref{T:main} and heuristics}

The main argument for proving Theorem \ref{T:main} will consist in the following estimate. Call $\alpha(t) = R_{nt} / \sqrt{\log n}$. We wish to show that $\alpha (t) \asymp \sqrt{t}$. Roughly speaking we show that
\begin{equation}\label{ode}
\frac{d\alpha}{dt} \approx \frac1{8\alpha(t)}.
\end{equation}
This comes from a dyadic decomposition where we consider the contribution to the position of the front $R_{nt}$ of particles arriving at time $k \in [nt, n(t+h)]$ in a dyadic interval $I_j = [R_k - 2^{j+1}, R_k-2^j]$ of length $2^j$ at distance $2^j$ from the right front $R_k$. Here one should think that $2^j$ is much smaller than $n$, but at least $1/\sqrt{n}$; with the minimal distance $1/\sqrt{n}$ determined by the behaviour of the conformal map which uniformises a single particle. We show that the contribution to the push of the front of scale $j$ during this interval is, roughly,
$$
\Delta_j \approx nh \times \frac{|I_j|}{2\alpha \sqrt{\log n}} \times \frac{\log 2}{n 2^{j+1}} \approx \frac{h \log 2}{4 \alpha \sqrt{\log n}}
$$
where the first term $nh$ is the total number of particles arriving during the interval, the second term corresponds to the probability for a particle to fall in $I_j$, and the last term gives the average push caused by such a particle. Since the right hand side does not depend on $j$, we see that all scales contribute equally. This is the reason beyond the explosive growth.

Summed over all scales, this gives
$$
R_{(t+h)n } - R_{tn} \approx \sum_j \Delta_j \approx \frac{h \sqrt{\log n}}{8 \alpha (t)}
$$
where the constant $8$ comes from the fact the smallest dyadic scale starts at distance $1/\sqrt{n}$ from the tip of the interval. Dividing by $\sqrt{\log n}$ to express the left hand side in terms of $\alpha$, this suggests that $\alpha(t)$ satisfies the differential equation \eqref{ode}.

Equivalently, starting from very small $\alpha(0) $, we show that the time (measured in units of $1/n$) for $\alpha$ to double is roughly $\alpha^2$ (that is, we must add roughly $\alpha^2 n$ particles). More precisely, starting from an initial value $\alpha (0) = \alpha_n$, the solution to this ODE is
$$
\alpha(t) = (1/2)\sqrt{t + \alpha_n^2/4} ,
$$
so that, for arbitrary $\d >0$, the time it takes to reach $\alpha_n(1+ \delta)$ is
$$
t = 4\alpha_n^2 ((1+\delta)^2 -1) =  4\alpha_n^2 (2\delta + \delta^2) \approx 8\delta \alpha_n^2.
$$


\section{Proof of Theorem \ref{T:main}}
Fix $t \in (0,T]$ and $ \eps >0$ as in the statement of Theorem \ref{T:main}. Let $\delta>0$ denote a constant to be chosen later, possibly depending on $\eps$. Recall that for $1 \le k \le nT$ we denote by $\lambda_k $ the length of the interval of the cluster under the mapping out function $\Gamma_k$, so that $\l_k = L_k + R_k$.
Define
	\[ l (n) = \frac{(\log n)^3}{\sqrt{n}} , \]
and introduce the stopping times
	\[ \begin{split}
	T_1 & = \inf \{ i\geq 1 : \l_i \geq l (n) \} , \\
	T_k & = \inf\{ i\geq T_{k-1} : \lambda_{i} \geq (1+\d ) \l_{T_{k-1}} \} , \qquad k\geq 2.
	\end{split} \]
We start by showing that, once the length of the interval exceeds $ l (n)$, the time it takes for it to grow by a factor $1+\d $ cannot be too small.

\blu{Write $\mathcal{F}_n = \sigma ( x_1 , x_2 \ldots x_n )$ for the $\sigma$--algebra generated by the attachment locations up to the $n^{th}$ arrival, for $n\geq 1$.}
\begin{prop}[Lower bound on doubling time]
\label{P:doublingtimelower}
For any $k\geq 2$ it holds \blu{that} 
	\[ \P \left( T_k - T_{k-1} \geq \left( \frac{2\d}{1+\eps}\right)
	 \frac{n}{\log n}  \l_{T_{k-1}}^2 \Big| \mathcal{F}_{T_{k-1}} \right)
	\geq 1 - e^{ -c (\log n )^2} \]
for $n$ large enough. Here $c=c(\eps , \d )$ is a constant which may depend on $\eps $ and $\d $ but not on $n$ nor $k$.
\end{prop}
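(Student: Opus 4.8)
\emph{Proof idea for Proposition \ref{P:doublingtimelower}.}
The plan is to convert the conformal recursion into an explicit scalar recursion for $\lambda_k$, extract from it the conditional drift and — crucially — the very small conditional variance of the increments, and then feed this into a martingale concentration bound. For the first step, note that $\Phi_{k+1}=\Phi_k\circ F_{x_{k+1}}$ gives $\Gamma_{k+1}=F_{x_{k+1}}^{-1}\circ\Gamma_k$, and since $\Gamma_k$ undoes $\Phi_k$ we have $\Gamma_k(\cK_{k+1})=\Gamma_k(\cK_k)\cup\Gamma_k(P_{k+1})=[-L_k,R_k]\cup[x_{k+1},x_{k+1}+i/\sqrt n]$. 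Applying the explicit single-slit map $F_{x_{k+1}}^{-1}$, which on the real line acts as $w\mapsto x_{k+1}\pm\sqrt{(w-x_{k+1})^2+1/n}$ according to the sign of $w-x_{k+1}$ and sends the added slit onto $[x_{k+1}-1/\sqrt n,x_{k+1}+1/\sqrt n]$, I would check that $\Gamma_{k+1}(\cK_{k+1})$ is again an interval, with $R_{k+1}=x_{k+1}+\sqrt{(R_k-x_{k+1})^2+1/n}$ and symmetrically $L_{k+1}=\sqrt{(L_k+x_{k+1})^2+1/n}-x_{k+1}$. Writing $u=R_k-x_{k+1}$ (uniform on $[0,\lambda_k]$ given $\cF_k$, with $\lambda_k-u=L_k+x_{k+1}$), this yields the clean identity $\lambda_{k+1}-\lambda_k=\phi(u)+\phi(\lambda_k-u)$ with $\phi(s)=\sqrt{s^2+1/n}-s\in(0,1/\sqrt n]$; in particular $(\lambda_k)$ is non-decreasing with increments at most $2/\sqrt n$.

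Next I would compute the conditional drift by integrating $\phi$ explicitly: using $\int_0^{\Lambda}\phi(s)\,ds\le\tfrac{1}{4n}\big(1+\log(9n\Lambda^2)\big)$ one gets
\[
\E[\lambda_{k+1}-\lambda_k\mid\cF_k]=\frac{2}{\lambda_k}\int_0^{\lambda_k}\phi(s)\,ds\le\frac{1+\log(9n\lambda_k^2)}{2n\lambda_k}\le\frac{(1+\eps')\log n}{2n\lambda_k}
\]
for any fixed $\eps'>0$, once $n$ is large and $\lambda_k\le n^{\eps'/2}$ — a range that covers all situations in which the proposition is used, since there $\lambda_{T_{k-1}}$, hence $\lambda_i$ for $i<T_k$ (which stays below $(1+\delta)\lambda_{T_{k-1}}$), is of order at most $\sqrt{\log n}$. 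A parallel computation gives the variance bound $\E[(\lambda_{k+1}-\lambda_k)^2\mid\cF_k]\le C\,n^{-3/2}\lambda_k^{-1}$, smaller than the naive $(2/\sqrt n)^2$ by a factor $\asymp\lambda_k\sqrt n$; this is the estimate that makes the argument succeed.

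For the concentration step I would fix $k\ge 2$, condition on $\cF_{T_{k-1}}$, put $m=\big\lceil\tfrac{2\delta}{1+\eps}\tfrac{n}{\log n}\lambda_{T_{k-1}}^2\big\rceil$, and consider the martingale $M_j=\sum_{i<j}\big(\Delta_i-\E[\Delta_i\mid\cF_{T_{k-1}+i}]\big)$ with $\Delta_i=\lambda_{T_{k-1}+i+1}-\lambda_{T_{k-1}+i}$. On the event $\{T_k-T_{k-1}<m\}$, putting $N:=T_k-T_{k-1}$ one has $\sum_{i<N}\Delta_i=\lambda_{T_k}-\lambda_{T_{k-1}}\ge\delta\lambda_{T_{k-1}}$, while by the drift bound (valid for $i<N$, since then $\lambda_{T_{k-1}+i}<(1+\delta)\lambda_{T_{k-1}}$) together with $\lambda$ non-decreasing and the choice of $m$,
\[
\sum_{i<N}\E[\Delta_i\mid\cF_{T_{k-1}+i}]\le m\cdot\frac{(1+\eps')\log n}{2n\lambda_{T_{k-1}}}\le(1-\eps'')\delta\lambda_{T_{k-1}},
\]
for a suitable $\eps''=\eps''(\eps)>0$ (choose $\eps'$ with $\tfrac{1+\eps'}{1+\eps}<1$). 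Hence $\max_{j\le m}M_j\ge M_N\ge\eps''\delta\lambda_{T_{k-1}}$. Freedman's inequality, with increment bound $2/\sqrt n$ and predictable quadratic variation at most $Cm\,n^{-3/2}\lambda_{T_{k-1}}^{-1}$ (the latter being negligible, by a factor $\asymp 1/\log n$, against the linear term $\tfrac{1}{\sqrt n}\eps''\delta\lambda_{T_{k-1}}$), then bounds $\P(T_k-T_{k-1}<m\mid\cF_{T_{k-1}})$ by $\exp(-c'\,\lambda_{T_{k-1}}\sqrt n)$ for $n$ large; since $\lambda_{T_{k-1}}\ge\lambda_{T_1}\ge l(n)=(\log n)^3/\sqrt n$, this is at most $e^{-c(\log n)^2}$ with $c=c(\eps,\delta)$, which is the claim.

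The step I expect to be the real obstacle is pinning down the constant $\tfrac12$ in the drift $\tfrac{\log n}{2n\lambda_k}$, since this is exactly what fixes the constant $\tfrac{2\delta}{1+\eps}$ in the statement: it comes from the logarithmically divergent integral $\int\tfrac{ds}{2ns}$ cut off at the single-particle scale $1/\sqrt n$, and the resulting lower-order term $\tfrac{\log\lambda_k}{n\lambda_k}$ must be absorbed into the $\eps$ via the a priori bound $\lambda_k=n^{o(1)}$, which will be supplied by the induction proving Theorem \ref{T:main} (so some care is needed to keep this proposition and that induction compatible). The second point requiring care is that the $e^{-c(\log n)^2}$ tail is \emph{not} obtainable from the bounded-increment estimate alone — Azuma's inequality would only give $e^{-c\log n}$ — so one genuinely has to exploit the sharp variance bound $\E[(\lambda_{k+1}-\lambda_k)^2\mid\cF_k]\lesssim n^{-3/2}\lambda_k^{-1}$ in order for Freedman's inequality to produce a stretched-exponential tail of the right order.
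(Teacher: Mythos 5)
Your proposal is correct, but it reaches the estimate by a genuinely different route than the paper. The paper never writes the exact recursion for the interval; instead it splits the push of the right front over dyadic scales $I_{i,j}=(R_i-2^{j+1},R_i-2^j]$, proves per-scale expectation and variance bounds (Lemmas \ref{L:push1particle} and \ref{L:exp_push}), applies Freedman's inequality once per scale, takes a union bound over the $\asymp\tfrac12\log_2 n$ scales, and handles the sub-$1/\sqrt n$ scale separately by a binomial (Chernoff) estimate for $\Delta_{\min}$; it also treats $R$ and $L$ separately and recombines. You instead exploit the exact identity $\l_{i+1}-\l_i=\phi(u)+\phi(\l_i-u)$ with $\phi(s)=\sqrt{s^2+1/n}-s$ and $u$ uniform on $[0,\l_i]$, compute the conditional drift $\approx\tfrac{\log n}{2n\l_i}$ and variance $\lesssim n^{-3/2}\l_i^{-1}$ in closed form, and run a single Freedman bound for one supermartingale; your constants match (the drift constant $\tfrac12$ reproduces $m_k=\tfrac{2\d}{1+\eps}\tfrac{n}{\log n}\l_{T_{k-1}}^2$, and the exponent $\asymp\l_{T_{k-1}}\sqrt n\geq(\log n)^3$ beats the required $(\log n)^2$), and your stopping-time bookkeeping ($M_N\geq\eps''\d\l_{T_{k-1}}$ on $\{T_k-T_{k-1}<m\}$, with the a.s.\ quadratic-variation bound from monotonicity of $\l$) is sound. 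What your route buys is economy: no dyadic bookkeeping, no union bound over scales, no separate $\Delta_{\min}$ estimate, and you correctly identify that the variance bound (not Azuma) is what produces the $e^{-c(\log n)^2}$ tail — exactly the role Freedman plays in the paper. What the paper's decomposition buys is reusability: the per-scale analysis of the push to $R$ alone is recycled later to get the separate concentration of $R_{nt}$ and $L_{nt}$ around $\tfrac12\sqrt{t\log n}$, and the scale-by-scale picture substantiates the heuristic that all scales contribute equally. One caveat you flag yourself: your drift constant requires an a priori bound $\l_{T_{k-1}}\leq n^{o(1)}$; note the paper's proof carries the same implicit restriction (its claim that every particle lands in some $I_{i,j}$ with $2^{j}\leq 2^{j_{\max}}\asymp\sqrt{T\log n}$ presumes $\l_i$ is not much larger than $\sqrt{T\log n}$), and in both arguments this is harmless because the proposition is only invoked for $k\leq K(T)\vee\tilde K(T)$, where such a bound holds; so this is a shared imprecision of the statement rather than a gap specific to your proof.
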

\begin{proof}
Write $m_k = \big(\frac{2\d}{1+\eps}\big) \frac{n}{\log n}  \l_{T_{k-1}}^2$ for brevity.
Then
	\[\begin{split}
	 \P  \left( T_k - T_{k-1} < m_k  \Big| \mathcal{F}_{T_{k-1}} \right)
	 & \leq \P ( \l_{T_{k-1} + m_k } \geq (1+\d ) \l_{T_{k-1}} )
	\\ &
	 \leq \P \left( R_{T_{k-1} + m_k } \geq  R_{T_{k-1}} + \frac{\d}2 \l_{T_{k-1}} \big| \mathcal{F}_{T_{k-1}} \right)
	 \\ & \, +
	 \P \left( L_{T_{k-1} + m_k } \geq  L_{T_{k-1}}  + \frac{\d}2  \l_{T_{k-1}} \big| \mathcal{F}_{T_{k-1}} \right) .
	\end{split}\]
We bound the first term in the right hand side. The second term can be bounded using exactly  the same argument.

The idea is to decompose over dyadic scales $[2^j, 2^{j+1})$ corresponding to the distance between the right end of the interval and the position of the particle that attaches.
Define
	\[\begin{split}
	 j_{min} & = \inf\Big\{ j \in \mathbb{Z} : 2^j \geq \frac 1 {\sqrt{n}} \Big\} , \\
	 j_{max} & = \inf \big\{ j \in \mathbb{Z} : 2^j \geq \sqrt{T \log n } \big\} . 	 \end{split} \]
For $i\geq 1$ and $j_{min} \leq j \leq  j_{max}$, let
  \[ \begin{split}
  I_{i, min} & = (R_i - 2^{j_{min}} , R_i ] , \\
  I_{i,j} & = (R_i - 2^{j+1} , R_i - 2^j ] . \\
	\end{split}
	\]
 \blu{These} intervals actually depend on time and are random, but they have fixed width $|I_{i,j} |=2^j$, and a given particle arriving at time $i+1$ attaches in one and only interval $I_{i,j}$ for some $j $.

 Let us describe the effect that a new arrival $x_{i+1}$ at time $i+1$ has on the interval $[-L_i , R_i ]$.

  \begin{lemma}\label{L:push1particle}
Suppose a new particle attaches at time $i$ at position $x \in I_{i,j}$ with
$j \geq j_{min}$, so that $x$ is at least distance $1/\sqrt{n}$ from $ R_i$. Then we have deterministically:
   $$
   R_{i+1} - R_i \le \frac1{2n (R_i-x)}
   $$
   and
   $$
   R_{i+1} - R_i \ge \frac1{2n (R_i - x)} - \frac{C}{(R_i-x)^3 n^2} ,
   $$
 for $C$ absolute constant.
     \end{lemma}
  \begin{proof}
  Observe that $R_{i+1}$ is obtained from $R_i$ by applying the map
  $$
  w \mapsto x + \sqrt{(w-x)^2 +1/n}
  $$
  so that
  $$
  R_{i+1} =x +  \sqrt{z^2 +1/n}, \text{ with } z = R_i -  x.
  $$
By convexity of the square root function,
\begin{align*}
R_{i+1} &\le x + \sqrt{z^2+ 1/n} \\
& \le x + z ( 1+ \frac{1}{2z^2n})  = x + z + \frac{1}{ 2zn}\\
& = R_i + \frac{1}{ 2zn} = R_i +\frac1{n(R_i-x)},
\end{align*}
which gives the desired upper bound.
On the other hand, for the lower bound, by  a Taylor expansion of $\sqrt{1+x}$ near zero,
$$
R_{i+1} \ge R_i + 1/(2nz) - C /(n^2 z^3)
$$
(which is valid since $z \ge 1/\sqrt{n}$ by assumption on $x$).
Since $z = R_i - x$ we get the desired inequality.
\end{proof}

\begin{lemma}\label{L:exp_push}
  In the same setting as above the following hold:
  \[ \begin{split}
   \E & ( R_{i+1} - R_i | x_{i+1} \in I_{i,j})\le \frac{\log 2}{n  2^{j+1}} , \\
   \E & ( R_{i+1} - R_i | x_{i+1} \in I_{i,j})\ge \frac{\log 2}{n  2^{j+1}} - \frac{C}{n^2 2^{3j}} ,
 \\  \var & (R_{i+1} - R_i | x_{i+1} \in I_{i,j}) \le \frac{C}{n^2 2^{2j}} , 
 \end{split} \]
\blu{where here and throughout the article  $C$ is an absolute constant that may change from line to line.}
\end{lemma}

\begin{proof}
  For the first upper bound, we use Lemma \ref{L:push1particle} and note that, conditionally on $x_{i+1} \in I_{i,j}$, $x_{i+1}$ is uniformly distributed over $I_{i,j}$, so that
  $$
  \E( R_{i+1} - R_i | x_{i+1} \in I_{i,j}) \le \int_{2^j}^{2^{j+1}} \frac{dx}{2nx2^j} = \frac{\log 2}{n  2^{j+1}},
  $$
  as desired.
  For the lower bound, we proceed similarly and note that
  \begin{align*}
    \E( R_{i+1} - R_i | x_{i+1}\in I_{i,j}) & \ge \int^{R_i- 2^j}_{R_i-2^{j+1}} \frac1{2(R_i - x)} \frac{dx}{n2^j} -  \int^{R_i- 2^j}_{R_i-2^{j+1}} \frac{C}{(R_i - x)^3} \frac{dx}{n^22^j}  \\
    &\ge \frac{\log 2}{n2^{j+1}}  - \frac{C}{n^2 2^j}\int_{2^j  }^{2^{j+1} } \frac{du}{u^3}
    \ge \frac{\log 2}{n2^{j+1}}  - \frac{C}{n^2 2^{3j}}
      \end{align*}
      as desired.
    Finally,
    \begin{align*}
  \var (R_{i+1} - R_i | x_{i+1} \in I_{i,j}) & \le \E ((R_{i+1} - R_i)^2 | x_{i+1} \in I_{i,j})\\
  & \le \int_{2^j}^{2^{j+1}} \frac{1}{n^2 x^2} \frac{dx}{2^{j+1}}\le \frac{C}{n^2 2^{2j}},
\end{align*}
as desired.
\end{proof}

Let us now go back to the proof of Proposition \ref{P:doublingtimelower}.
For $i \in [ T_{k-1}+1 , T_{k-1}+m_k ]$ let $x_i $ denote the \blu{attachment locations of the particles}, and for $j_{min} \leq j \leq j_{max} $, let
$$
\Delta_j = \sum_{i=T_{k-1}}^{T_{k-1}+m_k-1} 1_{\{x_{i+1} \in I_{i,j}\}} (R_{i+1} - R_i)
$$
denote the push to the right front caused by particles arriving in $I_{i,j}$ during the phase $[T_{k-1},T_{k-1}+m_k]$ (recall that the actual positions of the intervals $I_{i,j}$ change at each step, but their length is fixed).
Note that
	\[ \begin{split}
	\E ( \Delta_j | \mathcal{F}_{T_{k-1}} ) & =
	\sum_{i=T_{k-1}}^{T_{k-1}+m_k-1}
	\E ( 1_{\{x_{i+1} \in I_{i,j}\}} (R_{i+1} - R_i)| \mathcal{F}_{T_{k-1}} )
	\\ & = \sum_{i=T_{k-1}}^{T_{k-1}+m_k-1}
	\E ( R_{i+1} - R_i | x_{i+1} \in I_{i,j} ) \P ( x_{i+1} \in I_{i,j} )
	\\ & \leq \sum_{i=T_{k-1}}^{T_{k-1}+m_k-1}
	\frac{ \log 2}{n 2^{j+1} } \frac{|I_{i,j} | } {\l_{T_{k-1}}}
	= \frac{\log 2}{2n \l_{T_{k-1}}} m_k
	= \left( \frac {\d} {1+\eps} \right) \frac{\l_{T_{k-1}}}{\log_2 n} .
	\end{split} \]
\begin{lemma}\label{L:Deltaj}
For $k \geq 2$, $\d >0$ and any $\eps \in (0,1]$ it holds \blu{that} 
\[
\P \left( \sum_{j=j_{\min}}^{j_{\max}} \Delta_j
<  \frac{\d}2 \Big( 1-\frac{\eps}{8} \Big)  \l_{T_{k-1}}  \Big| \mathcal{F}_{T_{k-1}} \right)  \geq 1- e^{-c (\log n )^2 }
\]
for $n$ large enough ($n\geq n_0(\eps , T)$), where $c=c(\d , \eps )$ is a constant that may depend on $\d $ and $\eps$.
\end{lemma}
\begin{proof}
For $j_{\min} \leq j \leq j_{\max}$ set
\begin{align*}
D_j & = \Delta_j -\left( \frac {\d} {1+\eps} \right) \frac{\l_{T_{k-1}}}{\log_2 n} .
\end{align*}
In words, $D_j$ is obtained by subtracting from $\Delta_j$ an upper-bound on the conditional expectation of the push for each particle using Lemma \ref{L:exp_push}. Therefore, for each $j_{\min}\le j \le j_{\max}$, $D_j$ is the terminal value at time $m_k$ of a supermartingale. Furthermore, its total quadratic variation $Q_j$ at time $m_k$ (i.e.\ the sum of the conditional variances of the increments) is bounded by
  \[ \begin{split}
  Q_j &\le \sum_{i=T_{k-1}}^{T_{k-1}+m_k-1}
  \E ((R_{i+1} - R_i )^2 | x_{i+1} \in I_{i,j} ) \P (x_{i+1} \in I_{i,j} | \mathcal{F}_i)
  \\ & \leq
  \sum_{i=T_{k-1}}^{T_{k-1}+m_k-1}
  \frac{C}{n^2 2^{2j}} \frac{|I_{i,j} |}{\l_{T_{k-1}}}
  =  \frac{C}{n^2 2^{2j}} \frac{2^j}{\l_{T_{k-1}}} m_k \\
  & = C \left( \frac {2\d}{1+\eps} \right) \frac {\l_{T_{k-1}}}{2^j n \log n } =: b,
  \end{split}\]
 where the last equality defines $b$.
  Note also that each jump is bounded by $K = \frac1{n2^j}$. Applying Freedman's martingale inequality (Proposition 2.1 in \cite{Freedman}) we deduce that, with
  $$a = \frac{\delta \lambda_{T_{k-1}} }{ \log_2 n}  \left( \frac{1}{1+\eps /2} - \frac{1}{1+\eps} \right) $$ (note that $Ka \asymp b$ as $n \to\infty$),
  \begin{align*}
    \P \left( \Delta_j \geq \frac {\d \l_{T_{k-1}}}{\log_2 n} \frac{1}{(1+\eps /2)}\Big| \mathcal{F}_{T_{k-1}} \right)
    & = \P( D_j \ge a, Q_j \le b \, | \mathcal{F}_{T_{k-1}} )\\
    & \le \exp \left( - \frac{a^2}{Ka + b} \right)
    \le \exp \left( - c \frac{ n 2^j \l_{T_{k-1}} }{\log n } \right)
     \le e^{ - c (\log n )^2}
  \end{align*}
  since $2^j \geq 1/\sqrt{n}$ and $\l_{T_{k-1}} \geq \l_{T_1}\geq l(n) $. Here $c$ is a constant that may depend on $\d $ and $\eps$.
This bound  can be summed over all scales, to get that
  	\[ \begin{split}
  	\P \Big( \exists j\in [j_{min} , j_{max} ] : & \Delta_j \geq \frac {\delta \l_{T_{k-1}}}{(\log_2 n) (1+\eps /2)}  \Big| \mathcal{F}_{T_{k-1}} \Big)
  	\\ &  \leq ( j_{max} - j_{min} ) \P \Big( \Delta_j \geq \frac {\delta \l_{T_{k-1}}}{(\log_2 n)(1+\eps /2)} \Big| \mathcal{F}_{T_{k-1}} \Big)
  	\\ & \leq (\log_2 n) e^{ - c (\log n )^2}
  	 \leq e^{ - c' (\log n )^2}
  	 \end{split} \]
  for $n$ large enough. We used that $ j_{\max} - j_{\min} \leq   \frac 12 (\log_2 n )(1+\eps /4 )$ for $n$ large enough ($n\geq n_0(\eps , T)$).
It follows that, except on an event of probability at most $ \exp(- c (\log n)^2)$, it holds \blu{that} 
  \begin{align}
  \sum_{j=j_{\min}}^{j_{\max}} \Delta_j
  &\le \sum_{j=j_{\min}}^{j_{\max}}  \frac{\delta \l_{T_{k-1}} }{(\log_2 n)(1+\eps /2)}
  \leq  \frac{\d}2 \Big( 1-\frac{\eps}{8} \Big)  \l_{T_{k-1}} \label{pushscalej}
  \end{align}
for $n$ large enough, as long as $\eps \leq 1$, which we assumed.
\end{proof}

It remains to bound the displacement caused by particles arriving closer than $1/\sqrt{n}$ from the right front, which is encoded in
	\[ \Delta_{min} =
	 \sum_{i=T_{k-1}}^{T_{k-1}+m_k-1} 1_{\{x_{i+1} \in I_{i,min}\}} (R_{i+1} - R_i) .
	\]
\begin{lemma}\label{L:Deltamin}
For $k \geq 2$ and any $\eps , \d >0$ it holds \blu{that} 
\[
\P \left(
\Delta_{\min} < \left( \frac {6\d }{1+\eps} \right) \frac{\l_{T_{k-1}}}{\log n}
\Big| \mathcal{F}_{T_{k-1}} \right) \geq 1- e^{-c (\log n )^2 }
\]
for $n$ large enough, where $c$ is a constant that may depend on $\d $ and $\eps$.
\end{lemma}
\begin{proof}
This  can be bounded crudely: indeed, note that if $x_{i+1} \in I_{i,\min}$ then
  $$
  R_{i+1} - R_i \le 1/\sqrt{n}
  $$
  so
  $$
  \Delta_{\min} \le \frac{1}{\sqrt{n}}N_{\min}, \text{ where } N_{\min} = \#\{ T_{k-1}+1 \le i \le T_{k-1}+m_k : x_i \in I_{i,\min}\}.
  $$
  Note that $N_{\min}$ is dominated by a sum of independent indicator random variables where the probability of success is at most
  $$
  p_{\min} = \frac {2}{\sqrt{n} \l_{T_{k-1}}},
  $$
 since $\l_i \geq \l_{T_{k-1}}$ for all $i\geq T_{k-1}$ and $2^{j_{min} -1} < 1/\sqrt{n}$.
  Hence \begin{equation}
  \E(N_{\min}) \le m_k  p_{\min} = \left( \frac{4\d}{1+\eps} \right) \frac{ \sqrt{n}}{\log n} \l_{T_{k-1}} ,
  \label{E:ENmin}
  \end{equation}
  We conclude the proof of the lemma with the elementary fact on binomial random variables (which can also be deduced from Friedman's inequality) that for some $c>0$,
  $$
  \P( N_{\min} \ge (3/2) m_k p_{\min} ) \le \exp ( - c m_k p_{\min}) ,
  $$
 which implies that
 	\[ \begin{split}
 	\P \left( \Delta_{min} \ge \frac 32 \frac{m_k p_{min}}{\sqrt{n}} \Big| \mathcal{F}_{T_{k-1}} \right)
 	 & \leq \P ( N_{min} \geq (3/2) m_k p_{\min} )
 	\\ &  \leq \exp ( - c m_k p_{\min})
 	 \\ & \leq \exp ( -c (\log n )^2 )
 	 \end{split}
\]
as $\l_{T_{k-1}} \geq l(n) = (\log n)^3 / \sqrt{n} $ since $k \geq 2$ by assumption.
This, together with \eqref{E:ENmin}, concludes the proof.
  \end{proof}
To conclude the proof of Proposition \ref{P:doublingtimelower} we note that
 	\[ R_{T_{k-1} + m_k } - R_{T_{k-1}}
 	= \sum_{i=T_{k-1}}^{T_{k-1} + m_k -1 } ( R_{i+1} - R_i )
 	= \sum_{j=j_{min}}^{j_{max}} \Delta_j + \Delta_{min} . \]
from which, putting Lemmas \ref{L:Deltaj} and \ref{L:Deltamin} together, for $k\geq 2$ we obtain
	\[ \begin{split}
	\P \Big( R_{T_{k-1} + m_k } & - R_{T_{k-1}}  \geq \frac{\d}2 \l_{T_{k-1}} \Big| \mathcal{F}_{T_{k-1}} \Big)
	 \leq
	\P \bigg(
	 \sum_{j=j_{\min}}^{j_{\max}} \Delta_j
	 + \Delta_{\min} \geq   \frac{\delta}2 \l_{T_{k-1}}
	 \Big| \mathcal{F}_{T_{k-1}} \bigg)
	 \\ & \leq
	 \P \bigg(
	 \sum_{j=j_{\min}}^{j_{\max}} \Delta_j
	  \geq   \frac{\delta}2 \Big( 1-\frac{\eps }{8} \Big)\l_{T_{k-1}}
	 \Big| \mathcal{F}_{T_{k-1}} \bigg)
	 +
	 \P \bigg(  \Delta_{\min} \geq   \frac{\delta \eps }{16} \l_{T_{k-1}}
	 \Big| \mathcal{F}_{T_{k-1}} \bigg)
	 \\ &  \leq 2e^{-c (\log n )^2 }
	 \end{split}
	\]
for $n$ large enough ($n\geq n_0(\eps , T)$).
Similarly one shows that
	\[ \P \bigg( L_{T_{k-1} + m_k } - L_{T_{k-1}} \geq \frac{\d}2 \l_{T_{k-1}} \Big| \mathcal{F}_{T_{k-1}} \bigg)
	\leq 2e^{-c (\log n )^2 } , \]
and so
	\[ \P (T_k - T_{k-1} < m_k | \mathcal{F}_{T_{k-1}} )
	\leq 4 e^{-c (\log n )^2 } \]
for $n$ large enough.
\end{proof}	
To derive the upper bound on the growth, for any $t \in (0, T]$ define
	\[ S(t) = \inf\{ i\geq 1 : \l_i \geq \sqrt{t \log n } \} . \]
Note that $S(t)$ also depends on $n$, but we choose not to highlight this in the notation.
We will use Proposition \ref{P:doublingtimelower} to show that for any $\eps >0$
	\begin{equation}\label{Slower}
	 \P \left( \inf_{t\in (0,T]} \frac{S(t)}{nt} < \frac{1}{(1+\eps )^5  } \right)
	 \leq e^{-c(\log n)^2 }
	 \end{equation}
for $n$ large enough ($n \geq n_0(\eps , T )$). To see this, note that for any $k \geq 2$
	\[ \l_{T_k } \leq (1+\d ) \l_{T_{k-1}} + \frac 2 {\sqrt{n}}
	\leq (1+\d )^{k-1}\l_{T_1}  + \frac {2(k-1)}{\sqrt{n}} , \]
since the length of the interval increases by at most $1/\sqrt{n} $ in each direction upon a new arrival. Now using that $\l_{T_1} \leq l(n) + 2/\sqrt{n}$ we get
	\[
	\l_{T_k} \leq (1+\d )^{k-1} \left( l(n) + \frac 2 {\sqrt{n}} \right) + \frac {2(k-1)}{\sqrt{n}}
	\leq (1+\d )^{k-1} \frac {(\log n)^3}{\sqrt{n}} (1+\eps ) ,
	\]
where the last inequality holds as long as
	\begin{equation} \label{krequirement}
	2k \leq \eps (\log n)^3 .
	\end{equation}
It follows that if we let $K(t)$ denote the largest integer such that
	\begin{equation}\label{Kupper}
	 (1+\d )^{K(t)-1} < \frac{\sqrt{nt}}{(\log n)^{5/2} (1+\eps ) } ,
	 \end{equation}
then $\l_{T_{K(t)}} < \sqrt{t \log n } $, which forces $S(t) \geq T_{K(t)}$. Note that such $K(t)$ satisfies \eqref{krequirement} for $n$ large enough ($n\geq n_0(\eps , \d , T)$).
Thus
	\[ \begin{split}
	\P  \left( \inf_{t\in (0,T]} \frac{S(t)}{nt} < \frac{1}{(1+\eps )^5  } \right)
	& \leq \P  \left( \inf_{t\in (0,T] } \frac{T_{K(t)}}{nt} < \frac{1}{(1+\eps )^5  } \right)
	\\ & \leq \P  \left( \inf_{t\in (0,T] } \frac 1{nt} \sum_{k=2}^{K(t)} (T_k - T_{k-1} )  < \frac{1}{(1+\eps )^5 } \right) .
	\end{split} \]
Now, using that $ \l_{T_k} \geq (1+\d )^{k-1} l(n) $ for all $k\geq 2$ by definition of the stopping times $T_k$'s, it is easy to check that
	\[ \blu{\frac{1}{(1+\eps )^3 (1+\d ) }} \leq \frac 1{nt}\sum_{k=2}^{K(t)} m_k   \]
for all $t\in (0,T]$.
Taking $\d = \eps $ we get
	\begin{equation*}
	 \begin{split}
	\P  \left( \inf_{t\in (0,T]} \frac{S(t)}{nt}  < \frac{1}{(1+\eps )^5  } \right)
	&\leq \P \left( \exists t \in (0,T] : \,
	\sum_{k=2}^{K(t)} (T_k - T_{k-1} )  < \sum_{k=2}^{K(t)} m_k \right)
	\\ & = \P \left( \exists \, 2\leq K \leq K(T) : \,
	\sum_{k=2}^{K} (T_k - T_{k-1} )  < \sum_{k=2}^{K} m_k \right)
	\\ & \leq \sum_{K=2}^{K(T)} \P \left( \exists \, 2\leq k \leq {K} : T_k - T_{k-1} < m_k \right)
	\\ & \leq \sum_{K=2}^{K(T)} \sum_{k=2}^K \E ( \P (T_k - T_{k-1} < m_k | \mathcal{F}_{T_{k-1}} ) )
	\\ & \leq (K(T))^2 e^{-c (\log n)^2} \leq e^{-c' (\log n )^2 }
	\end{split}
	\end{equation*}
for $n$ large enough (depending on $\eps$ and $T$).  Here we have used Proposition \ref{P:doublingtimelower} in the fourth inequality, and \eqref{Kupper} in the last one.


\bigskip

We now turn to showing that the doubling time cannot be too large.

To start with, we need to control the time $T_1$ it takes to get to minimal length $l(n)= (\log n)^3 / \sqrt{n}$. It is easy to check that, while $\l_i \leq l(n)$, one has
	\[ R_{i+1} - R_i \geq \frac{1}{\sqrt{n} (\log n)^4} \]
for $n$ large enough ($n$ larger than an absolute constant): simply use\footnote{\blu{Here we are using the half--plane geometry.}} that the minimal push to the right is obtained when the new particle lands at the left end $x_{i+1} = -L_i$. It follows that $\l_{i+1} - \l_i \geq 1/(\sqrt{n} (\log n )^4)  $, from which we deduce that, deterministically, $T_1 \leq (\log n)^7$.

Now, once the interval has reached minimal length $l(n)$, it cannot grow too slowly, as shown in the next proposition.

\begin{prop}[Upper bound on doubling time]
\label{P:doublingtimeupper}
For any $k\geq 2$ it holds \blu{that} 
	\[ \P \left( T_k - T_{k-1} < 2\d(1+\eps )
	 \frac{n}{\log n}  \l_{T_{k-1}}^2 \Big| \mathcal{F}_{T_{k-1}} \right)
	\geq 1 - e^{ -c (\log n )^2} \]
for $n$ large enough ($n\geq n_0(\eps , T)$). Here $c=c(\d , \eps )$ is a constant which may depend on $\eps $ and $\d $ but not on $n$ nor $k$.
\end{prop}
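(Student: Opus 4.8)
Here is how I would approach the proof.

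\medskip

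The plan is to run the argument in parallel with the proof of Proposition~\ref{P:doublingtimelower}, but using every push estimate in the opposite direction: instead of bounding the advance of the front from \emph{above} by an over-estimate of its conditional mean and checking it does not reach $\tfrac{\d}{2}\l_{T_{k-1}}$, I would bound the advance from \emph{below} and invoke the lower-deviation half of Freedman's inequality to show this lower bound is essentially attained. Write $m_k':=2\d(1+\eps)\tfrac{n}{\log n}\l_{T_{k-1}}^2$ and $m:=\lceil m_k'\rceil-1$ (so $m\geq(1-o(1))m_k'$, since $\l_{T_{k-1}}\geq l(n)$ forces $m_k'\to\infty$). Because each arrival strictly increases both $R_i$ and $L_i$, the length $\l_i$ is nondecreasing, hence $\{T_k-T_{k-1}\geq m_k'\}=\{\l_{T_{k-1}+m}<(1+\d)\l_{T_{k-1}}\}$; on this event $(R_{T_{k-1}+m}-R_{T_{k-1}})+(L_{T_{k-1}+m}-L_{T_{k-1}})<\d\l_{T_{k-1}}$, so at least one of the two increments is $<\tfrac{\d}{2}\l_{T_{k-1}}$. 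Thus it suffices to bound $\P(R_{T_{k-1}+m}-R_{T_{k-1}}<\tfrac{\d}{2}\l_{T_{k-1}}\mid\cF_{T_{k-1}})$ by $e^{-c(\log n)^2}$, prove the identical bound with $L$ in place of $R$, and take a union bound. As in Proposition~\ref{P:doublingtimelower} I would work with the chain stopped at $\sigma:=T_k\wedge(T_{k-1}+m)$: every active step $i<\sigma$ then satisfies $\l_i<(1+\d)\l_{T_{k-1}}$, while on the event we want to bound $\sigma=T_{k-1}+m$ and the stopping is invisible.

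\medskip

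The core of the proof is a lower bound on the conditional mean advance of $R$ over the window. I would discard, as nonnegative, the contribution of $\Delta_{\min}$ and of the $O_\eps(1)$ smallest dyadic scales: fix a constant $j_0=j_0(\eps)$ so that for $j\geq j_{\min}+j_0$ the correction $\tfrac{C}{n^2 2^{3j}}$ of Lemma~\ref{L:exp_push} is at most $\tfrac{\eps}{100}\cdot\tfrac{\log 2}{n 2^{j+1}}$, and let $\cG$ be the set of $j$ with $j_{\min}+j_0\leq j\leq\log_2(\l_{T_{k-1}}/2)$ --- large enough for the correction to be negligible, and small enough that $I_{i,j}\subset[-L_i,R_i]$ for all $i<\sigma$. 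Then for $j\in\cG$ and $i<\sigma$, Lemma~\ref{L:exp_push} together with $\P(x_{i+1}\in I_{i,j}\mid\cF_i)=2^j/\l_i\geq 2^j/((1+\d)\l_{T_{k-1}})$ gives $\E(1_{\{x_{i+1}\in I_{i,j}\}}(R_{i+1}-R_i)\mid\cF_i)\geq(1-\tfrac{\eps}{100})\tfrac{\log 2}{2n(1+\d)\l_{T_{k-1}}}$, independently of $j$. Summing over $\cG$ and over the $m$ active steps, using $m\geq(1-o(1))m_k'$ and $(\log 2)|\cG|=(1-o(1))\tfrac12\log n$, the conditional mean advance of $R$ is at least $(1-o(1))(1-\tfrac{\eps}{100})\tfrac{\d(1+\eps)}{2(1+\d)}\l_{T_{k-1}}\geq(1+\gamma)\tfrac{\d}{2}\l_{T_{k-1}}$ for some $\gamma=\gamma(\eps,\d)>0$ and $n$ large, the slack coming from the extra factor $(1+\eps)$ built into $m_k'$. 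I expect this step to be the main obstacle: here all the small corrections (the $\tfrac{C}{n^2 2^{3j}}$ terms, the discarded scales, the factor $1/(1+\d)$ from $\l_i\leq(1+\d)\l_{T_{k-1}}$) now work against us, and one must be certain that genuinely $\asymp\tfrac12\log_2 n$ dyadic scales lie inside $[-L_i,R_i]$ --- the honest count is $\asymp\log_2(\l_{T_{k-1}}\sqrt n)$, which is why the lower bound $\l_{T_{k-1}}\geq l(n)$ is essential (and near $T_1$, where $\l_{T_1}$ is only of order $l(n)$, one would have to either absorb the ramp-up into the crude deterministic bounds already used for $T_1$, or read $\log(\l_{T_{k-1}}\sqrt n)$ in place of $\log n$).

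\medskip

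Granting the push lower bound, the rest is routine concentration. With $\tilde R$ the stopped chain, $M_\ell:=\sum_{i=T_{k-1}}^{\ell-1}\big[(\tilde R_{i+1}-\tilde R_i)-\E(\tilde R_{i+1}-\tilde R_i\mid\cF_i)\big]$ is a martingale whose increments are bounded by $K\asymp 1/\sqrt n$ (Lemma~\ref{L:push1particle}), and whose total conditional variance over the window is, by the variance estimate of Lemma~\ref{L:exp_push} plus the crude bound $(R_{i+1}-R_i)^2\leq 1/n$ on the $I_{i,\min}$ term, at most $b\asymp\d\,\l_{T_{k-1}}/(\sqrt n\log n)$ (the sum over scales being geometric, dominated by the smallest). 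On $\{\sigma=T_{k-1}+m\}$ the decomposition $\tilde R_{T_{k-1}+m}-R_{T_{k-1}}=M_{T_{k-1}+m}+\sum_i\E(\tilde R_{i+1}-\tilde R_i\mid\cF_i)$ and the previous paragraph force $\{R_{T_{k-1}+m}-R_{T_{k-1}}<\tfrac{\d}{2}\l_{T_{k-1}}\}\subset\{M_{T_{k-1}+m}<-\gamma\tfrac{\d}{2}\l_{T_{k-1}}\}$, and Freedman's inequality (Proposition~2.1 in \cite{Freedman}) applied to $-M$ with $a=\gamma\tfrac{\d}{2}\l_{T_{k-1}}$ yields, since $Ka+b\leq C(\gamma,\d)\,\l_{T_{k-1}}/\sqrt n$,
\[
\P\big(\{\sigma=T_{k-1}+m\}\cap\{R_{T_{k-1}+m}-R_{T_{k-1}}<\tfrac{\d}{2}\l_{T_{k-1}}\}\mid\cF_{T_{k-1}}\big)\leq \exp\!\Big(-\tfrac{a^2}{Ka+b}\Big)\leq e^{-c\,\l_{T_{k-1}}\sqrt n}\leq e^{-c(\log n)^2},
\]
using $\l_{T_{k-1}}\sqrt n\geq(\log n)^3$ from $\l_{T_{k-1}}\geq l(n)=(\log n)^3/\sqrt n$ ($k\geq2$). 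Since $\{T_k-T_{k-1}\geq m_k'\}$ lies in the union of this event with its $L$-analogue, a union bound finishes the proof, the constant $c=c(\eps,\d)$ absorbing the dependence on $\gamma$ and $j_0$.
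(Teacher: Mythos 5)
Your route is essentially the paper's own: reverse the two estimates of Lemma \ref{L:exp_push}, discard the $O_\eps(1)$ smallest dyadic scales so the Taylor correction $C/(n^2 2^{3j})$ becomes an $\eps$-fraction of the main term, lower-bound the conditional mean advance of $R$ over the window of length $\asymp 2\d(1+\eps)\tfrac{n}{\log n}\l_{T_{k-1}}^2$, conclude by Freedman, and combine the $R$- and $L$-bounds. Your two refinements (stopping at $\sigma=T_k\wedge(T_{k-1}+m)$ so that $\l_i\le(1+\d)\l_{T_{k-1}}$ is actually justified, and one global martingale instead of a per-scale union bound) are sound, and the Freedman step does deliver $e^{-c(\log n)^2}$ once the mean estimate is granted, since $\l_{T_{k-1}}\sqrt n\ge(\log n)^3$. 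The genuine gap is exactly the step you flag and do not close: you need $(\log 2)|\cG|=(1-o(1))\tfrac12\log n$, but restricting (as you must) to scales with $2^{j+1}\le\l_{T_{k-1}}$ gives $|\cG|\asymp\log\big(\l_{T_{k-1}}\sqrt n\big)$, and the proposition is asserted for \emph{every} $k\ge2$, where for small $k$ one only knows $\l_{T_{k-1}}\asymp l(n)=(\log n)^3/\sqrt n$. Then $|\cG|\asymp\log\log n$, the conditional mean advance over the window is only of order $\d\,\l_{T_{k-1}}\log\log n/\log n\ll\tfrac{\d}{2}\l_{T_{k-1}}$, no $\gamma(\eps,\d)>0$ exists, and your argument proves the statement only for those $k$ with $\l_{T_{k-1}}\ge n^{-c(\eps,\d)}$, not as stated.

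You should also know that you have put your finger on a real soft spot rather than missed a trick. The paper's proof sums the bound $\E(\tilde\Delta_j\mid\cF_{T_{k-1}})\ge\d(1+\eps/2)\l_{T_{k-1}}/\log_2 n$ over all scales up to $j_{max}$ with $2^{j_{max}}\asymp\sqrt{T\log n}$, which tacitly uses $\P(x_{i+1}\in I_{i,j}\mid\cF_i)\ge 2^j/((1+\d)\l_{T_{k-1}})$ for every such $j$; but when $2^{j+1}>\l_i$ the interval $I_{i,j}$ lies outside $[-L_i,R_i]$ and this probability vanishes, which is precisely your scale-counting objection. In the regime $\l_{T_{k-1}}\asymp l(n)$ the expected push per particle is $\asymp\log\log n/(n\l_{T_{k-1}})$ rather than $\asymp\log n/(n\l_{T_{k-1}})$, so the true time to grow by a factor $1+\d$ is of order $n\l_{T_{k-1}}^2/\log\log n$, which exceeds $n_k$; hence no argument can close the gap for such $k$ without modifying the statement. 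Your proposed repairs are the right ones: prove the proposition with $\log(\l_{T_{k-1}}\sqrt n)$ in place of $\log n$ (or only for $\l_{T_{k-1}}\ge n^{-c}$) and absorb the ramp-up regime into a cruder bound before it is fed into \eqref{Supper}; this costs nothing for fixed $t>0$, since the extra ramp-up time is polynomially smaller than $n$, but it does affect the bottom of the time range claimed in Theorem \ref{T:main}.
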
	
\begin{proof}
This can be proved reasoning as in Proposition \ref{P:doublingtimelower}.
Write
	\[ n_k = 2\d(1+\eps )
	 \frac{n}{\log n}  \l_{T_{k-1}}^2\]
for brevity, and define
	\[
	\tilde{\Delta}_j = \sum_{i=T_{k-1}}^{T_{k-1}+n_k-1} 1_{\{x_{i+1} \in I_{i,j}\}} (R_{i+1} - R_i)
	\]
for $\tilde{j}_{min} \leq j \leq j_{max}$, where $j_{max}$ is as before, and \blu{$\tilde{j}_{min}$} is chosen, depending on $\eps$, so that
	\[ \left( \frac{\log 2}{2n} - \frac{C}{n^2 2^{2j}} \right) \geq \frac{\log n}{2n} \frac{1+\eps /2}{1+\eps} , \]
where $C$ is the absolute constant in Lemma \ref{L:push1particle}.
This ensures that
	\[ \E ( \tilde{\Delta}_j | \mathcal{F}_{T_{k-1}} ) \geq
	\d ( 1+\eps /2) \frac{ \l_{T_{k-1}}}{\log_2 n} . \]
Thus
	\[  - \tilde{\Delta}_j + \d ( 1+\eps /2) \frac{ \l_{T_{k-1}}}{\log_2 n} \]
is the terminal value at time $n_k$ of a supermartingale, and one again concludes by Freedman's inequality that
	\[ \P \left( \tilde{\Delta}_j < \frac{ \delta \l_{T_{k-1}}}{\log _2 n} \Big| \mathcal{F}_{T_{k-1}} \right) \leq e^{-c (\log n)^2 } \]
for $n$ large enough, depending on $\eps $ and $T$, and a constant $c=c(\d , \eps )$.
This can be summed over all scales $\tilde{j}_{min} \leq j \leq j_{max} $ to get that, since
	 	\[ R_{T_{k-1} + n_k } - R_{T_{k-1}}
 	= \sum_{i=T_{k-1}}^{T_{k-1} + n_k -1 } ( R_{i+1} - R_i )
 	\geq  \sum_{j=\tilde{j}_{min}}^{j_{max}} \Delta_j  , \]
for $k\geq 2$ it holds \blu{that} 
	\[ \begin{split}
	\P \Big( R_{T_{k-1} + n_k } - R_{T_{k-1}} < \frac{\d}2 \l_{T_{k-1}} | \mathcal{F}_{T_{k-1}} \Big)
	& \leq
	\P \bigg(
	 \sum_{j=\tilde{j}_{\min}}^{j_{\max}} \tilde{\Delta}_j
	  <   \frac{\delta}2 \l_{T_{k-1}}
	 \Big| \mathcal{F}_{T_{k-1}} \bigg)
	 \\ &  \leq e^{-c (\log n )^2 }
	 \end{split}
	\]
for $n$ large enough. The results follows by observing that the same bound holds for $L_{T_{k-1} + n_k } - L_{T_{k-1} }$.
\end{proof}

We now discuss how the above result allows to conclude that the interval cannot grow too slowly, once it has reached minimal length $l(n)$. We show that, with $t_0(n) = (\log n)^8/n$,
	\begin{equation}\label{Supper}
	 \P  \left( \sup_{t\in [t_0(n),T]} \frac{S(t)}{nt} > (1+\eps )^5   \right)
	\leq e^{-c (\log n)^2}
	\end{equation}
for $n \geq n_0(\eps , T)$.
For $t \in [t_0(n) ,T] $ let $\tilde{K}(t)$ denote the smallest  integer such that
	\[ (1+\d )^{\tilde{K}(t)-1} > \frac{\sqrt{nt}  }{(\log n )^{5/2}} , \]
so that
	\begin{equation}\label{KtildaUP}
	 (1+\d )^{\tilde{K}(t)-2} \leq  \frac{\sqrt{nt}  }{(\log n )^{5/2}} .
	 \end{equation}
Then $\lambda_{T_{\tilde{K}(t)}} \geq \sqrt{t \log n } $, which forces $S(t) \leq T_{\tilde{K}(t)}$.
Moreover, since $\l_{T_k } \leq (1+\d ) \l_{T_{k-1}} + 2 /\sqrt{n}$
for all $k \geq 2$, it is easily checked that, by the choice of $\tilde{K}(t)$,
	\[  \frac{1}{nt} \sum_{k=2}^{\tilde{K}(t)} n_k \leq  (1+\eps )^2 (1+\delta )^2  \]
for all $t \in (0,T]$ and $n$ large enough, depending only on $\d  , \eps$.
Thus, taking again $\d = \eps$, we have that
	\[ \begin{split}
	\P \bigg( \sup_{t \in [t_0(n) , T] }  \frac{S(t)}{nt} & > (1+\eps )^5 \bigg)  \leq
	\P \left( \sup_{t \in [t_0(n) , T] } \frac{T_{\tilde{K}(t)}}{nt} > (1+\eps )^5 \right)
	\\ & \leq
	\P \left( \sup_{t \in [t_0(n) , T] }  \frac{T_1}{nt} > \eps  \right) +
	\P \left( \sup_{t \in [t_0(n) , T] }  \frac 1{nt}
	\sum_{k=2}^{\tilde{K}(t)} (T_k - T_{k-1} ) > (1+\eps )^4 \right)
	\\ & \leq \P \left(  T_1 > \eps (\log n)^8  \right) +
	\P \left( \exists\,  t \in [t_0(n) , T ] : \,
	\sum_{k=2}^{\tilde{K}(t)}  (T_k - T_{k-1} ) > \sum_{k=2}^{\tilde{K}(t)}n_k \right)
	\\ & \leq
	\P \left( \exists \, 2\leq K \leq \tilde{K}(T) : \, \sum_{k=2}^{K}  (T_k - T_{k-1} ) > \sum_{k=2}^{K}n_k \right)
	\\ & \leq \sum_{K=2}^{\tilde{K}(T)} \P \left( \exists \, 2\leq k \leq K : \, T_k - T_{k-1}  > n_k \right)
	\\ & \leq \sum_{K=2}^{\tilde{K}(T)} \sum_{k=2}^{K}  \E ( \P (T_k - T_{k-1} > n_k | \mathcal{F}_{T_{k-1}} ) )
	\\ & \leq (\tilde{K}(T))^2  e^{-c (\log n)^2} \leq e^{-c' (\log n )^2 }
	\end{split} \]
for $n$ large enough ($n\geq n_0(\eps , T)$).
The fourth inequality above holds for $n$ large enough so that $\eps (\log n)^8 \geq (\log n)^7$, which makes $\P (T_1 > \eps (\log n)^8 )$ vanish. For the last one we have used  \eqref{KtildaUP}.

Putting together the upper and lower bounds \eqref{Slower} and \eqref{Supper} for $S(t)$ we gather that for any $\eps \in (0,1] $ and $T>0$ it holds \blu{that} 
	\begin{equation}\label{Sconc}
	 \P \left(
	 \frac{nt}{(1+\eps )^5} \leq S(t) \leq nt (1+\eps )^5
	 \quad \forall t \in [t_0(n) , T ] \, \right)
	\geq 1-e^{-c(\log n)^2}
	\end{equation}
for $n$ large enough (depending on $\eps , T$). To deduce that a similar concentration result holds for the length $\l_{nt}$ we use that
	\[ \sqrt{t \log n } \leq S(t) \leq \sqrt{t \log n } + \frac 2{\sqrt{n}} \]
for all $t \in (0,T]$. This tells us that, on the event in \eqref{Sconc},
	\[ \l_{nt(1+\eps )^5 } \geq \l_{S(t)} \geq \sqrt{t \log n } \qquad \forall t \in [t_0(n) ,T] , \]
which implies
	\[ \l_{nt} \geq \sqrt{\frac{t \log n }{(1+\eps )^5}} \geq (1-3\eps ) \sqrt{t \log n }
	\qquad \forall t \in [t_0(n) (1+\eps )^5  ,T] \]
for $\eps$ small enough ($\eps \leq \eps_0$ with $\eps_0$ absolute constant).

For the upper bound note that, again on the event in \eqref{Sconc},
	\[ \l_{nt(1-\eps )^5} \leq \l_{S(t) } \leq \sqrt{t \log n } + \frac 2 {\sqrt{n}} , \]
which implies
	\[ \l_{nt} \leq \sqrt{\frac{t \log n }{(1-\eps )^5}} + \frac 2 {\sqrt{n}}
	\leq (1+3\eps ) \sqrt{t \log n } \qquad \forall t  \in [t_0(n) (1-\eps )^5 , T ] \]
for $\eps$ small enough ($\eps \leq \eps_0$ with $\eps_0$ absolute constant). This shows that for any $\eps >0$ small enough, and any $T>0$, there exists $n_0(\eps , T)$ such that
	\begin{equation} \label{lambda_concentration}
	 \P \left( (1-3\eps ) \sqrt{t \log n} \leq \l_{nt} \leq (1+3\eps ) \sqrt{t \log n } \quad
	\forall t \in \Big[ \frac{(\log n)^9}{n} , T \Big] \right)  \geq 1-e^{-c(\log n)^2}
	\end{equation}
for all $n\geq n_0(\eps , T)$,  which concludes the proof of  \eqref{Lmain}. \\

It remains to deduce that $R_{nt}$ and $L_{nt}$ are both concentrated around $\frac 12 \sqrt{t \log n }$. We show that there exists an absolute constant $\eps_0$ such that for all $\eps \in (0, \eps_0 )$ it holds
	\[ \P \left( \frac{\sqrt{t \log n }}{2(1+\eps )^6} \leq R_{nt} \leq \frac 12 \sqrt{t\log n } (1+\eps )^6 \quad
	\forall t \in \Big[ \frac{(\log n)^9}{n} , T \Big] \right) \geq 1 - e^{-c (\log n )^2} \]
for $n$ large enough, and $c$ constant independent of $n$. To this end, recall the definition of $K(t)$ from \eqref{Kupper}, and note that for any $t\in (t_0(n),T]$, restricting to  the high probability event $\{T_k - T_{k-1} \geq m_k \mbox{ for all } 2 \leq k \leq K(T) \}$, we have
	\[ T_{K(t)} \geq \sum_{k=2}^{K(t)} ( T_k - T_{k-1} ) \geq \sum_{k=2}^{K(t)} m_k \geq \frac {nt}{(1+\eps )^5 } . \]
It follows that
	\[ R_{\frac{nt}{(1+\eps )^5}} \leq R_{T_{K(t)}}
	= R_{T_1} + \sum_{k=2}^{K(t)} (R_{T_k} - R_{T_{k-1}} ) \]
for all $t \in (t_0(n),T ]$.
Restricting further to the event $\{ T_k - T_{k-1} \leq n_k  \mbox{ for all } 2\leq k \leq \max\{K(T) , \tilde{K}(T)\} \}$ we have that $R_{T_k} - R_{T_{k-1}} \leq R_{T_{k-1} + n_k } - R_{T_{k-1}} $. It can be shown, using exactly the same argument as  the one of Proposition \ref{P:doublingtimelower}, that
	\[ \P \left( R_{T_{k-1} + n_k } - R_{T_{k-1}} \leq \frac \d 2 ( 1+\eps )^2  \l_{T_{k-1} }
	 \mbox{ for all } 2\leq k \leq K(T) \right) \geq 1-e^{-c (\log n)^2 } \]
for $n$ large enough ($n\geq n_0(\eps , T)$). Thus, with probability exceeding  $1-3e^{-c (\log n)^2}$, we have that
	\[ R_{\frac{nt}{(1+\eps )^5}} \leq \l_{T_1} + \frac \d 2 (1+\eps )^2 \sum_{k=2}^{K(t)} \l_{T_{k-1}} 
	\blu{\leq \frac{(\log n )^3 + 2}{\sqrt{n}} + \frac{(1+\varepsilon)^2}{2} \sqrt{t \log n }}
	\leq \frac{(1+\eps )^3}2 \sqrt{t \log n }  \]
for all $t \in (t_0(n),T]$,
where in the second inequality we have used the definition of $K(t)$. Rearranging, this gives
	\[ \P \left( R_{nt} \leq \frac{(1+\eps )^6}2 \sqrt{t \log n } \quad \forall t \in (t_0(n),T] \right)
	\geq 1-2e^{-c(\log n)^2} .\]
Similarly one shows that
	\[ \P \left( L_{nt} \leq \frac{(1+\eps )^6}2 \sqrt{t \log n } \quad \forall t \in (t_0(n),T] \right)
	\geq 1-2e^{-c(\log n)^2} .\]
Combining these with the lower bound in \eqref{lambda_concentration} we also deduce that, with probability exceeding $1-5   e^{-c(\log n)^2}$,
	\[ R_{nt} = \l_{nt} - L_{nt} \geq \sqrt{t\log n } \left( \frac{1}{(1+\eps )^3} - \frac 12 (1+\eps )^6 \right)  \geq \frac 1{2 (1+\eps )^6} \sqrt{t\log n } \]
for all $t \in [(\log n)^9 /n , T ]$,
where the last inequality holds as long as $\eps \leq \eps_0$ for some absolute  constant $\eps_0 >0$. The same holds for $L_{nt}$, which concludes the proof of Theorem \ref{T:main}.

\section{Proof of Theorem \ref{T:geometric}}

We first prove that the height of the cluster is deterministically bounded. This comes from the simple observation that the half-plane capacity of $\cK_{nt}$ is deterministically given by $t$ and the following simple lemma: if $K$ is a compact $\H$-hull such that $x+ i y \in K$ for some $x \in \R, y >0$ then $$\hcap (K) \ge y^2/2.$$
This can be proved using a simple reflection argument about the line $\{\Re(z) = x\}$ and monotonicity of the half-plane capacity; we leave the details to the reader (this is implicit in \cite{lawler2008conformally}, and explicitly proved in the note \cite{Lalley_etal}, see also Proposition 6.1 of \cite{BN} for a closely related statement).

As a consequence, since $ \hcap (\cK_{nt}) = t$ then we deduce that
$$
\max_{z \in \cK_{nt}} \{ \Im(z)\} \le \sqrt{2t},
$$
as desired.

Now let us turn to the diameter. The argument is based on estimating the harmonic measure of $\cK_{nt}$ viewed from infinity (which is conformally invariant) and relating this to the diameter, given that the cluster is necessarily very flat by the above. We start with the lower bound on the diameter which is slightly easier.

Fix $t>0$. Let $L = (1 - \eps) \sqrt{t \log n}$ and denote by $K = \cK_{nt}$ for ease of notations. Let $z = i y$ with $y>0$ very large. Suppose for contradiction that $\diam (K) \le L$, so $K$ could fit in a rectangle $\cR$ of height $\sqrt{2t}$ and width $L$, in the upper half plane and with the bottom side on the real line, and which we can assume to be centered without loss of generality. See Figure \ref{F:composition}. Then applying the conformal map $g_K$, the hull $K$ is by definition mapped to the interval $[- L_t, R_t]$. Let $\tilde \cR = g_K (\cR \setminus K)$. If we then apply the conformal map $g_{\tilde \cR}$, then by monotonicity, we have
$$
R_t \le r, L_t \le \ell
$$
where $- \ell $ and $r$ respectively are
the extreme points of $g_{\cR} ( \partial \cR \cap \H)$. (See Figure \ref{F:composition}). In fact by symmetry note that $\ell = r$. Furthermore we claim that
\begin{equation}\label{E:goal_ell}
\ell \le L/2 + C,
\end{equation} where $C$ is a constant depending only on $t$. From here the contradiction (and thus the lower bound) follows, since we know that $\min(R_t, L_t)  \ge (1 - \eps/2)(1/2) \sqrt{t \log n}$ with probability converging to $1$ as $n \to \infty$.

\begin{figure}
  \begin{center}
    \includegraphics[width=.8\textwidth]{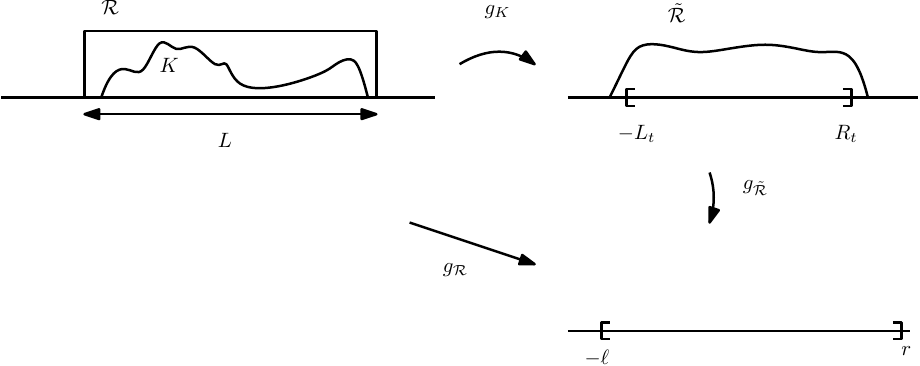}
  \end{center}
  \caption{Composition of conformal maps $g_{\cR} = g_{\tilde \cR}\circ g_K$.}
  \label{F:composition}
\end{figure}

This can be seen directly because the conformal map $g_{\cR}$ is known explicitly (it is a Schwarz--Christoffel map, since $D  = \H \setminus \cR$ is a polygon), but it can be seen more robustly as follows.
Fix $z = i y$ with $y>0$ very large. Let $\tau_D$ denote the first time at which a Brownian motion $B$ leaves the domain $D$.
Let $\tau$ denote the first time that the imaginary part of $B$ reaches $\sqrt{2t}$. Let $E$ be the event that $|\Re (B_\tau)| \le L/2 $.
\begin{align*}
\P^{iy} (B_{\tau_D} \in \cR)  & = \P^{iy} ( E) + \P^z(B_{\tau_D} \in \cR; E^c).
\end{align*}
Now observe that $\P^z(E)$ may be computed through the integral over the interval $[-L/2, L/2]$ of a Cauchy distribution with scale parameter $y'= y - \sqrt{2t}$: hence after a change of variables,
$$
\P^{iy} (E) = \int_{-L/(2y')}^{L/(2y')} \frac{1}{\pi(x^2 + 1)} dx \sim \frac{L}{\pi y}
$$
as $y \to \infty$ (and $t>0$ fixed), where $a_n \sim b_n$ means that the ratio $a_n / b_n \to 1$. Now let us turn to $E^c$, and observe that if \blu{$B_{\tau_D} \in \cR$} and $E^c$ both hold, then necessarily Brownian motion hits one of the vertical sides of the rectangle $\cR$ before the real line. In words, if $I_1, I_2$ are the vertical sides, and $D^i = \H \setminus I_i$, then
$$
(\{ B_{\tau_D} \in \cR\} \cap E^c )\subset \left\{B_{\tau_{D^1}} \in I_1 \right\} \cup \left\{B_{\tau_{D^2}} \in I_2 \right\}
$$
Hence by symmetry between $I_1$ and $I_2$,
$$
\P^{iy} (B_{\tau_D} \in \cR; E^c) \le 2 \P^{iy} ( B_{\tau_{D^1}} \in I_1 ) \sim 2 \blu{\frac{\text{cap} (I_1)}{\pi y}} , 
$$
\blu{where $\text{cap} ( I ) $ denotes the capacity of $I$ from infinity in $\H$ (see \cite{BN}, Section 4.1) which, we remark, differs from the half--plane capacity $\text{hcap}(I)$.} 
Altogether, since \blu{$\text{cap} (I_1) = 2\sqrt{2t} $ (see \cite{BN}, Section 6.1)}, we see that
\begin{equation}\label{E:L}
\lim_{y \to \infty }y \P^{iy} (B_{\tau_D} \in \cR) \le  L/\pi + 4\sqrt{2t}/\pi.
\end{equation}
To get the desired inequality on $\ell$ we simply argue by conformal invariance:
\begin{equation}\label{E:ell}
\P^{iy} ( B_{\tau_D}  \in\cR)  = \P^{g_{\cR} (iy)} ( B_{\tau_{\H}} \in [- \ell, \ell]) \sim \frac{2 \ell}{\pi y}
\end{equation}
by a similar integration of the Cauchy distribution and the fact that $g_{\cR} (iy) \sim i y $. Multiplying by $y$ and letting $y \to \infty$, and comparing \eqref{E:ell} with \eqref{E:L}, we get
$$
\frac{2\ell}{\pi} \le  \frac{L}{\pi} + 4\sqrt{2t}/\pi,
$$
which proves \eqref{E:goal_ell} with $C =  \sqrt{32t}$. This proves the desired lower bound on $\diam (\cK_{nt})$.

Now let us turn to the corresponding upper bound. We again argue by contradiction. Let $L = (1+ \eps) \sqrt{t \log n}$. Suppose $\diam (K) \ge L $. Then there exists $a,b \in K$ with $|a- b | \ge  L$. Since $K$ is connected there is a curve in $K$ connecting $a$ to $b$ (and so its maximal height is at most $\sqrt{2t}$). We argue similarly as above. Let $I_1, I_2$ be the vertical segments connecting $a$ and $b$ respectively to the real line; call $x_1$ and $x_2$ the respective base points on $\R$ of $I_1$ and $I_2$. By translation we can assume $x_1 = -x_2$ and by the triangle inequality $|x_1 - x_2 | \ge L - O(1)$. Let $E = \{ \tau_H \le \tau_{\H} \} = \{ B_{\tau_H} \in K\}$.
\begin{align*}
  \P^{iy}( B_{\tau_\H} \in [x_1, x_2] ) & = \P^{iy} ( B_{\tau_\H} \in [x_1, x_2] ; E ) + \P (B_{\tau_\H} \in [x_1, x_2]  ; E^c).
\end{align*}
For the same topological reasons as above, if $B_{\tau_\H} \in [x_1, x_2]  $ and $E^c$ both hold, it must be the case that the Brownian motion $B$ touched either $I_1$ or $I_2$ before the real line (indeed, to enter the interval $[x_1, x_2]$ the Brownian curve cannot avoid $I_1$ and $I_2$ without touching the curve connecting $a$ and $b$, but then $E$ holds). Therefore,
$$
 \P^{iy}( B_{\tau_\H} \in [x_1, x_2] )  \le \P^{iy}(E) + \P^{iy} ( B_{\tau_{D_1}} \in I_1) + \P^{iy} ( B_{\tau_{D_2}} \in I_2).
$$
Now we multiply by $y$ and let $y \to \infty$. Using the same argument as above, we see that the left hand side converges to $|x_1 - x_2| /\pi  = L /\pi - O(1). $ The right hand side has three terms. The first, by conformal invariance of Brownian motion, converges to $| L_t - R_t | / \pi$. The second and third respectively converge to $\text{cap} (I_1)$ and $\text{cap}(I_2)$, both of which are $O(1)$.
Consequently, we obtain
$$
L \le  | L_t - R_t | + O(1).
$$
By Theorem \ref{T:main}, the probability of the above tends to zero.

\paragraph{Acknowledgements.} This work took place while N.B. was visiting professor at the University of Rome La Sapienza, whose hospitality is gratefully acknowledged. 

We would like to thank James Norris for some useful and inspiring conversations, both in relation with this project and over the years. We are also grateful to the anonymous referee for a careful reading and thoughtful comments on the paper. 

NB’s work was supported by a University of
Vienna start-up grant, and FWF grant P33083 on “Scaling limits in random conformal geometry”.


\bibliography{HLbiblio}
\bibliographystyle{plain}

\end{document}